\documentclass[12pt,a4paper]{amsart}
\usepackage{amssymb,amsmath,amsthm}
\usepackage{graphicx}
\usepackage{hyperref}
 \usepackage[color=green!40]{todonotes}

\newcommand\cT{{\mathcal T}}

\newcommand{\ex}{\mathop{}\!\mathrm{ex}}

\makeatletter
\newtheorem*{rep@theorem}{\rep@title}
\newcommand{\newreptheorem}[2]{%
\newenvironment{rep#1}[1]{%
 \def\rep@title{#2 \ref{##1}}%
 \begin{rep@theorem}}%
 {\end{rep@theorem}}}
\makeatother

\theoremstyle{plain}
\newtheorem{theorem}{Theorem}[section]
\newreptheorem{theorem}{Theorem}

\newtheorem{proposition}[theorem]{Proposition}

\newtheorem{problem}[theorem]{Problem}
\theoremstyle{definition}

\newtheorem{defn}[theorem]{Definition}

\newtheorem{claim}[theorem]{Claim}

\newtheorem{rem}[theorem]{Remark}

\newcommand\cref[1]{Corollary~\ref{cor:#1}}

\textheight=8in 
\textwidth=6.6in
\topmargin=0.3in \oddsidemargin=0in
\evensidemargin=0in

\title{Connected Turán number of trees}

\author{Yair Caro}
\address{Department of Mathematics, University of Haifa-Oranim, Israel}
\email{yacaro@kvgeva.org.il}

\author{Bal\'azs Patk\'os}
\address{Alfr\'ed R\'enyi Institute of Mathematics}
\email{patkos@renyi.hu}
\thanks{Patk\'os's research is partially supported by NKFIH grants SNN 129364 and FK 132060.}

\author{Zsolt Tuza}
\address{Alfr\'ed R\'enyi Institute of Mathematics and University of Pannonia}
\email{tuza@dcs.uni-pannon.hu}
\thanks{Tuza's research is partially supported by NKFIH grant SNN 129364.}
\date{}

\begin{document}

\begin{abstract}
    As a variant of the much studied Tur\'an number, $\ex(n,F)$, the largest number of edges that an $n$-vertex $F$-free graph may contain, we introduce the connected Tur\'an number $\ex_c(n,F)$, the largest number of edges that an $n$-vertex connected $F$-free graph may contain. We focus on the case where the forbidden graph is a tree. The celebrated conjecture of Erd\H os and S\'os states that for any tree $T$, we have $\ex(n,T)\le(|T|-2)\frac{n}{2}$. We address the problem how much smaller $\ex_c(n,T)$ can be, what is the smallest possible ratio of $\ex_c(n,T)$ and $(|T|-2)\frac{n}{2}$ as $|T|$ grows. We also determine the exact value of $\ex_c(n,T)$ for small trees, in particular for all trees with at most six vertices. We introduce general constructions of connected $T$-free graphs based on graph parameters as longest path, matching number, branching number, etc.
\end{abstract}

\maketitle

\section{Introduction}

One of the most studied problems in extremal graph theory is to determine the Tur\'an number $\ex(n,F)$, the largest number of edges that an $n$-vertex graph can have without containing a subgraph isomorphic to $F$. In this paper, we study a variant of this parameter: the connected Tur\'an number $\ex_c(n,F)$ is the largest number of edges that a \textit{connected} $n$-vertex graph can have without containing $F$ as a subgraph. Observe that if $F$ is 2-edge-connected, then any maximal $F$-free graph $G$ is connected, as if $G$ had at least two components, then adding an edge between them would not create any copy of $F$. Also, if the chromatic number of $F$ is at least 3, then by the famous theorem by Erd\H os, Stone, and Simonovits \cite{ES,ESt}, we know that $\ex(n,F)$ is attained asymptotically (and for some graphs precisely) at the Tur\'an graph that is connected. These two observations imply the following proposition.

\begin{proposition}\
\begin{enumerate}
    \item 
    If all components of\/ $F$ are 2-edge-connected, then\/ $\ex(n,F)=\ex_c(n,F)$.
    \item
    If\/ $\chi(F)\ge 3$, then\/ $\ex_c(n,F)=(1+o(1))\ex(n,F)$.
\end{enumerate}
\end{proposition}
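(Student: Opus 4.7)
Both parts rest on the trivial inequality $\ex_c(n,F)\le\ex(n,F)$, since every connected $F$-free graph is in particular $F$-free. In each case the job is therefore to produce a connected $F$-free graph whose edge count matches, exactly or asymptotically, the unconstrained extremum.

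For part (1), the plan is to take an extremal $n$-vertex $F$-free graph $G$ with $\ex(n,F)$ edges and show that, as long as $G$ is disconnected, one can add an edge while keeping it $F$-free. Pick any $e=uv$ joining two distinct components of $G$ and suppose for contradiction that $G+e$ contains a copy $F'$ of $F$. Since $G$ itself is $F$-free, the edge $e$ must appear in $F'$; let $C$ be the component of $F'$ containing $e$. By hypothesis $C$ is 2-edge-connected, hence bridgeless, so $C-e$ is still connected. This yields a connected subgraph of $G$ joining $u$ and $v$, contradicting the fact that $u$ and $v$ lie in different components of $G$. Iterating, one obtains a connected $F$-free graph on $n$ vertices with at least $\ex(n,F)$ edges, so $\ex_c(n,F)\ge\ex(n,F)$.

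For part (2), set $k=\chi(F)\ge 3$ and consider the Tur\'an graph $T(n,k-1)$. Every subgraph of $T(n,k-1)$ is $(k-1)$-colorable and therefore cannot contain the $k$-chromatic graph $F$. Since $k-1\ge 2$, $T(n,k-1)$ is a complete multipartite graph with at least two nonempty parts and is thus connected. Hence $\ex_c(n,F)\ge |E(T(n,k-1))|$, and the Erd\H{o}s--Stone--Simonovits theorem gives $|E(T(n,k-1))|=(1+o(1))\ex(n,F)$, which establishes the claimed asymptotic.

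There is no real obstacle; the only step that needs any care is the bridge argument in (1), where 2-edge-connectedness of the components of $F$ is precisely what prevents $e$ from playing the role of a bridge in a hypothetical new copy of $F$.
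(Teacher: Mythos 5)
Your proof is correct and takes essentially the same route as the paper: part (1) is the same bridge/2-edge-connectedness observation (the paper phrases it via maximal $F$-free graphs being connected, you phrase it as iteratively adding edges between components of an extremal graph, and your explicit argument that the component of $F'$ containing $e$ minus $e$ would connect two components of $G$ is exactly the reason the paper's one-line claim works), and part (2) is the same appeal to the connectivity of the Tur\'an graph together with the Erd\H{o}s--Stone--Simonovits theorem.
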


The asymptotics of $\ex(n,F)$ is unknown for most biparite $F$ (for a general overview of the so-called degenrate Tur\'an problems, see the survey by F\"uredi and Simonovits \cite{FS}). And we do not know the relationship of $\ex(n,F)$ and $\ex_c(n,F)$ for most bipartite $F$ that are not 2-edge-connected. There is a relatively large literature on the Tur\'an number of forests (see e.g.\ \cite{BK,LLST, LLP,YZ,ZW}), and in many cases the extremal graphs turned out to be connected, so for those forests $F$, we have $\ex(n,F)=\ex_c(n,F)$. A wide and important class of connected non-2-edge-connected graphs is the set of trees. A famous conjecture of Erd\H os and S\'os (that appeared in print first in \cite{E}) states that any $n$-vertex graph with more than $\frac{(k-2)n}{2}$ edges contains any tree $T$ on $k$ vertices. A proof was announced in the early 1990's by Ajtai, Koml\'os, Simonovits, and Szemer\'edi, but only arguments of special cases have appeared. A recent survey of these and other degree conditions that imply embeddings of trees is \cite{S}. The universal construction that shows the tigthness of the Erd\H os--S\'os conjecture is the union of vertex-disjoint cliques of size $k-1$. This is not a connected graph and we are only aware of one result concerning $\ex_c(n,T)$ (but there exist results on Tur\'an problems in connected host graphs, see e.g.\ \cite{BJ}). We denote by $P_k$ the path on $k$ vertices. The value of $\ex_c(n,P_k)$ was determined by Kopylov, and independently by Balister, Gy\H ori, Lehel, and Schelp with the latter group also showing the uniqueness of extremal constructions.

\begin{theorem}[Kopylov \cite{K}, Balister, Gy\H ori, Lehel, Schelp \cite{BGLS}]\label{paths}
If\/ $G$ is an\/ $n$-vertex connected graph that does not contain any paths on\/ $k+1$ vertices, then $$e(G)\le \max\left\{\binom{k-1}{2}+n-k+1,\binom{\lceil \frac{k+1}{2}\rceil}{2}+\left\lfloor \frac{k-1}{2}\right\rfloor\left(n-\left\lceil \frac{k+1}{2}\right\rceil\right)\right\}$$ holds.
\end{theorem}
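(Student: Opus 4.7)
The plan is to prove the theorem by induction on $n$, starting from a small base case $n_0=n_0(k)$ where the structure of connected $P_{k+1}$-free graphs is sufficiently restricted that the bound is verifiable directly (one can take $n_0$ to be $k$ or $k+1$, where the first expression in the maximum is forced to be the correct bound). Denote by $f(n,k)$ the right-hand side of the displayed inequality. A first useful reduction is to assume $\delta(G)\ge 2$: if $v$ is a leaf, then $G-v$ is connected and $P_{k+1}$-free on $n-1$ vertices, and both terms of $f$ grow by at least $1$ as $n$ grows by $1$, so the induction hypothesis applied to $G-v$ gives the claim for $G$.

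The next reduction is to assume that $G$ is $2$-connected. If $w$ is a cut vertex, decompose $G$ along its block--cut tree. Each block together with $w$ is a connected $P_{k+1}$-free graph on fewer vertices, and a long path through $w$ can use at most two blocks at $w$, since it may visit $w$ only once. Applying the induction hypothesis to each block and summing, taking care not to double-count $w$, one verifies that the resulting edge bound is at most $f(n,k)$ in all cases, using that $f$ is superadditive across the block decomposition in a suitable sense.

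The main obstacle is the $2$-connected case. Here I would take a longest path $P=v_1v_2\cdots v_\ell$ in $G$ with $\ell\le k$ and apply P\'osa's rotation--extension technique at both endpoints. Rotations produce a large family of longest paths sharing the same vertex set; combined with $2$-connectivity, this yields a cycle $C$ of length close to $\ell$ on which much of $G$ is supported. Vertices of $V(G)\setminus V(C)$ must have all their neighbors on $C$, and for any two neighbors $v_i,v_j$ of such an outside vertex, ``splicing'' along $C$ would produce a $P_{k+1}$ unless the positions of $v_i,v_j$ obey specific constraints. Translating these constraints into neighborhood bounds produces the two extremal regimes: a dichotomy based, essentially, on whether $\delta(G)>\lfloor(k-1)/2\rfloor$ or not. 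In the first regime one is forced toward a clique-like configuration matching $\binom{k-1}{2}+n-k+1$, namely a $K_{k-1}$ with a pendant tree; in the second regime the structure is book-like, with a small clique of size $\lceil(k+1)/2\rceil$ dominated by a hub of size $\lfloor(k-1)/2\rfloor$, matching the second bound.

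The hardest step is the quantitative case analysis inside the $2$-connected argument: the two extremal configurations are geometrically very different (clique-plus-tree versus book), so the challenge is to choose the right invariant---longest path, minimum degree, or circumference---as the axis along which to split the argument. In each case the edge count reduces to bounding, for each vertex outside the core clique or book, the number of its neighbors inside the core, with the bound dictated precisely by the $P_{k+1}$-freeness condition via the rotation argument. Tightness of both expressions is witnessed by the two constructions above, and the maximum correctly captures the extremal value across all values of $n$ for which the theorem applies.
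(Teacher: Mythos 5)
This theorem is not proved in the paper at all: it is quoted from Kopylov and from Balister--Gy\H ori--Lehel--Schelp, so there is no in-paper argument to compare against. Judged on its own, your proposal is a reasonable roadmap but not a proof; two of its steps have genuine gaps. First, the block-decomposition reduction does not work with the induction hypothesis as you state it. Each block is indeed a connected $P_{k+1}$-free graph on fewer vertices, but applying the bound with the \emph{same} parameter $k$ to every block and summing can overshoot: for $k=5$, two copies of $K_4$ glued at a cut vertex give $f(4,5)+f(4,5)=12$ edges against the target $f(7,5)=11$ (that graph of course contains $P_6$, which is exactly the point --- the constraint linking two blocks at a cut vertex is that their longest paths \emph{ending at} the cut vertex have total length at most $k$, not that each block is separately $P_{k+1}$-free). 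So the induction hypothesis must be strengthened to a rooted version (maximum edges of a connected graph in which no path starting at a prescribed root has more than $a$ vertices), and the path budget $a$ must be distributed over the blocks at each cut vertex; the phrase ``superadditive in a suitable sense'' is precisely the statement that needs proving and is false in the naive sense. Relatedly, your base-case claim is off: at $n=k+1$ the \emph{second} expression is the larger one (e.g.\ $k=5$, $n=6$ gives $9>8$), and at $n=k$ the stated bound is violated by $K_k$, so the theorem implicitly requires $n\ge k+1$ and the base case is not ``forced to be the first expression.''

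Second, and more seriously, the $2$-connected case --- which is the entire content of Kopylov's theorem --- is left as an assertion. ``Rotations produce a cycle $C$ of length close to $\ell$'' and ``translating these constraints into neighborhood bounds produces the two extremal regimes'' describe the desired outcome, not an argument. The known proofs require a substantive additional idea here: Kopylov's disintegration method (repeatedly delete vertices of degree at most $s$ for a suitable $s$, and either the graph disintegrates, yielding the book-type bound, or a core of minimum degree greater than $s$ survives, which by a separate lemma forces a long path/cycle), or the careful endpoint-rotation induction of Balister et al. Your sketch does not say how the dichotomy between the clique-plus-tree regime and the book regime is actually established, nor how the edge count is closed in either regime. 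As written, the proposal would not compile into a proof without supplying both the strengthened rooted induction hypothesis and the full $2$-connected argument.
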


We shall now present the various results obtained concerning $\ex_c(n ,T)$. Lower bound  constructions are given in Section  2   and exact determination of $\ex_c(n,T)$ including all trees up to 6 vertices is included in Section 3.

Our first result gathers several constructions, all based on some graph parameters, that provide lower bounds on $\ex_c(n,T)$.
For those parameters we use the following notation.

\begin{defn}\
\begin{itemize}
    \item 
$\ell(G)$ denotes the number of vertices in a longest path in $G$.
\item
$p(G)$ denotes the maximum number of vertices in a path $P$ of $G$ such that for all $x\in V(P)$ we have $d_G(x)\le 2$.
\item
$\Delta(G)$ and $\delta(G)$ denote the maximum and the minimum degree in $G$.
\item
$\nu(G)$ denotes the number of edges in a largest matching of $G$.
\item
$\delta_2(T)$ denotes the smallest degree in $T$ that is larger than 1.
\item
For a vertex $v\in V(T)$ let $m_T(v)$ be the size of largest component of $T - v$ and let $m(T)=\min\{m_T(v):v\in V(T)\}$. 
\item
For a vertex $v\in V(T)$ let $m_{T,2}(v)$ be the sum of the sizes of two largest components of $T - v$ and let $m_2(T)=\min\{m_{T,2}(v):v\in V(T)\}$.
\item
For an edge $e=xy \in E(G)$ we write $w(e)=\min\{d_G(x),d_G(y)\}$ and define $w(G)=\max\{w(e):e\in E(G)\}$.
\end{itemize}
\end{defn}

\begin{proposition}\label{parameter} \
Suppose\/ $T$ is a tree on\/ $k\ge 4$ vertices.
\begin{enumerate}
    \item\label{longestpath}
     $\ex_c(n,T)\ge \binom{\lceil \frac{\ell(T)}{2}\rceil}{2}+ \lfloor \frac{\ell(T)-2}{2}\rfloor(n-\lceil \frac{\ell(T)}{2}\rceil)$.
    \item\label{inducedpath}
     $\ex_c(n,T)\ge (\binom{k-2p(T)-3}{2}+p(T)+2)\lfloor\frac{n}{k-p(T)-2}\rfloor$. Furthermore, if\/ $T$ contains at least two vertices of degree at least three, then\/ $\ex_c(n,T)\ge \frac{\binom{k-p(T)-1}{2}+p(T)+2}{k}n-O(k)$.
    \item\label{maxdeg}
     $\ex_c(n,T)\ge \lfloor \frac{n(\Delta(T)-1)}{2}\rfloor$.
    \item\label{matching}
     $\ex_c(n,T)\ge (\nu(T)-1)(n-\nu(T)+1)+\binom{\nu(T)-1}{2}$.
    \item\label{secondsmallest}
     If\/ $T$ is not a star and\/ $\delta_2(T)>2$, then\/ $\ex_c(n,T)\ge \lfloor \frac{n-1}{k-1}\rfloor(\binom{k-2}{2}+\delta_2(T)-1)$.
    \item\label{bipartition}
    If the bipartition of\/ $T$ consists of classes of sizes\/ $a$ and\/ $b$ with\/ $a\le b$, then\/ $\ex_c(n,T)\ge (a-1)(n-a+1)$.
    \item\label{largestbranch}
     If\/ $T$ is not a path, then\/ $\ex_c(n,T)\ge n-1+ \lfloor \frac{n-1}{m(T)-1}\rfloor \binom{m(T)-1}{2}$.
    \item\label{twobranches}
     $\ex_c(n,T)\ge \lfloor \frac{n}{k-m_2(T)}\rfloor(1+\binom{k-m_2(T)}{2})$.
     \item\label{weight}
     $\ex_c(n,T)\ge (w(T)-1)(n-w(T)+1)$.
\end{enumerate}
\end{proposition}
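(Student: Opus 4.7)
The plan is to prove each of the nine lower bounds by exhibiting a concrete connected $n$-vertex graph $G$ with the claimed edge count and verifying $T\not\subseteq G$. Connectivity and the edge count will be immediate from the construction in every case; the heart of each argument is the $T$-freeness verification, which in every part exploits the particular tree parameter in play.

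Several parts reduce to a single ``global'' construction. Part~(\ref{longestpath}) follows from Theorem~\ref{paths} applied with $k+1=\ell(T)$, because $T\supseteq P_{\ell(T)}$ and the Kopylov extremal graph realising the second term of the maximum is connected. Part~(\ref{maxdeg}) is any near-$(\Delta(T)-1)$-regular connected graph, whose maximum degree is too small to host a vertex of degree $\Delta(T)$. Part~(\ref{matching}) is the join $K_{\nu(T)-1}\vee\overline{K_{n-\nu(T)+1}}$, whose matching number is capped at $\nu(T)-1$ by the obvious vertex cover. Parts~(\ref{bipartition}) and~(\ref{weight}) both use a complete bipartite graph $K_{s,n-s}$: in~(\ref{bipartition}) we take $s=a-1$, so no colour class of $T$ fits into the smaller side; in~(\ref{weight}) we take $s=w(T)-1$, so the endpoint of a witness edge landing on that side would need $w(T)$ neighbours on a side of size $w(T)-1$, a contradiction.

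The remaining constructions glue small gadgets through a cut vertex or a bridging edge. Part~(\ref{largestbranch}) takes a universal vertex $v$ and partitions the other $n-1$ vertices into groups of size $m(T)-1$ each spanning a $K_{m(T)-1}$; since all components of $G-v$ have size $m(T)-1<m_T(x)$ for every $x\in V(T)$, no vertex of $T$ may map to $v$, and $T$ cannot fit into a single $K_{m(T)-1}$. Part~(\ref{twobranches}) takes $\lfloor n/(k-m_2(T))\rfloor$ disjoint cliques $K_{k-m_2(T)}$ linked in a cycle by single edges; an embedded $T$ would force a vertex of $T$ minimising $m_{T,2}$ to map to a cut vertex of $G$, yet the sum of the two largest side-components at any cut vertex is strictly smaller than $m_2(T)$. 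Part~(\ref{secondsmallest}) uses a universal vertex $v$ together with $\lfloor(n-1)/(k-1)\rfloor$ identical gadgets of $k-1$ new vertices each, where each gadget is a $K_{k-2}$ plus one extra vertex adjacent to $v$ and to $\delta_2(T)-2$ of the clique vertices; the hypothesis $\delta_2(T)>2$ forces every non-leaf vertex of $T$ to map to a vertex of $G$-degree at least $\delta_2(T)$, and both the extra vertex and the ``local'' degree of $v$ inside a single gadget are only $\delta_2(T)-1$, so no internal vertex of $T$ can span from one gadget into another.

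Part~(\ref{inducedpath}) is the most delicate, and it is the main obstacle of the proof. I would partition $V(G)$ into blocks of $k-p(T)-2$ vertices, each a clique $K_{k-2p(T)-3}$ with a pendant path of $p(T)+1$ new vertices attached at one clique vertex, giving the required $\binom{k-2p(T)-3}{2}+p(T)+2$ edges per block, and join consecutive blocks by a single bridging edge. The $T$-freeness argument uses that every maximal path of degree-$\le 2$ vertices in $G$ lives inside one block and contains exactly $p(T)+1$ vertices---just one more than the bare path of $T$, but not enough to also accommodate the two branching vertices of $T$ flanking its bare path inside the same block. The ``furthermore'' strengthening, valid when $T$ has at least two branching vertices, replaces the single clique of each block by two cliques joined through the pendant path. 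Across the whole proposition, the recurring technical obstacle is precisely this $T$-freeness verification: in every case one must identify which vertex of $T$ is forced into a bottleneck of $G$ and then rule out each locally admissible image by a pigeonhole on component sizes, matching number, degree, or bipartition class.
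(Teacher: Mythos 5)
Eight of your nine constructions are exactly the paper's (Kopylov's graph for (\ref{longestpath}); near-$(\Delta(T)-1)$-regular graphs; $K_{\nu(T)-1}+E_{n-\nu(T)+1}$; the hub-with-gadgets graph for (\ref{secondsmallest}), where $v$ should be adjacent only to the extra vertex of each gadget, not universal; $K_{a-1,n-a+1}$; $K_1+(rK_{m(T)-1}\cup K_s)$; the cycle of cliques of size $k-m_2(T)$; $K_{w(T)-1,n-w(T)+1}$), and so is the general construction in (\ref{inducedpath}). Some verifications are loosely worded: the cycle of cliques in (\ref{twobranches}) has no cut vertices --- the correct argument is that only two edges leave each clique $A_i$, these leave from \emph{distinct} vertices, so besides $v$ and all non-escaping components of $T-v$ at least one vertex of an escaping component also lands in $A_i$, forcing $|A_i|\ge k-m_2(T)+1$; and in the general part of (\ref{inducedpath}) the decisive point is not a count of ``flanking branching vertices'' but that a copy of $T$ cannot traverse a full connecting path (its preimage would be a bare path on $p(T)+1$ vertices of $T$), hence is confined to one clique plus at most $p(T)$ vertices from each adjacent path, i.e.\ at most $k-3$ vertices.

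The genuine gap is the ``furthermore'' part of (\ref{inducedpath}). Your proposal keeps the chain structure and enlarges the cliques so that each block is a $K_{k-p(T)-1}$ followed by a path of $p(T)+1$ vertices. This achieves the claimed edge count but is not $T$-free: an interior clique now meets \emph{two} connecting paths, and a copy of $T$ confined to one clique plus the first $p(T)$ vertices of each adjacent path has $(k-p(T)-1)+2p(T)=k+p(T)-1\ge k$ vertices at its disposal, so the confinement count no longer yields a contradiction. Concretely, for $T=D_{3,3}$ (so $k=8$, $p(T)=1$, cliques $K_6$, connecting paths of two vertices) the two attachment vertices of an interior $K_6$ serve as the two centers, the remaining four clique vertices as four leaves, and one vertex from each incident path as the last two leaves --- a copy of $D_{3,3}$. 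The paper's construction is structurally different: a single central vertex $v$ carries $s$ legs, each a bare path $P_i$ of $p(T)+1$ vertices ending in a clique $C_i\cong K_{k-p(T)-1}$ attached at exactly one vertex. Since $T$ has two vertices of degree at least $3$ while the spider $\{v\}\cup\bigcup_i P_i$ has only one, any copy of $T$ must use a clique vertex; it then cannot contain all of $P_i$, hence cannot reach $v$ or any other gadget, and is confined to $|C_i|+|P_i|-1=k-1$ vertices. The single point of attachment of each clique is exactly what your chain lacks, so this part needs to be replaced by the spider construction (or repaired in some other way).
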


According to the Erd\H os--S\'os conjecture, $\ex(n,T)=\frac{k-2}{2}n+O_k(1)$. We would like to know how much smaller $\ex_c(n,T)$ can be than $\ex(n,T)$. For any tree $T$ we introduce
$$\gamma_T:=\limsup_n\frac{2}{|T|-2}\,\frac{\ex_c(n,T)}{n}$$
where $|T|$ denotes the number of vertices in $T$.
It is well-known that any graph with average degree at least $2d$ contains a subgraph with minimum degree at least $d$. Also, any tree on $k$ vertices can be embedded to any graph with minimum degree at least $k$. This shows that $\gamma_T\le 2$ for any tree $T$ on $k$ vertices. The Erd\H os--S\'os conjecture would imply $\gamma_T\le 1$.

Let $\cT_k$ denote the set of trees on at least $k$ vertices. We write $\gamma_k:=\inf\{\gamma_T:T\in \cT_k\}$ and $\gamma:=\lim_{k\rightarrow \infty}\gamma_k$ (the limit exists as $\gamma_k$ is monotone increasing).

\begin{theorem}\label{gamma}
The following upper and lower bounds hold:\/ $\frac{1}{3}\le \gamma\le \frac{2}{3}$.
\end{theorem}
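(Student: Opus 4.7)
The plan is to establish the two bounds separately.

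For the lower bound $\gamma \ge 1/3$, I will show that every tree $T$ on $k$ vertices satisfies $\gamma_T \ge 1/3 - O(1/k)$, via a short case analysis using Proposition~\ref{parameter}. If $T = P_k$ is a path, item (\ref{longestpath}) with $\ell(T) = k$ gives $\gamma_T = 1$. Otherwise $T$ is not a path, and I split based on $m(T)$. If $m(T) \ge k/3 + 2$, item (\ref{largestbranch}) yields $\gamma_T \ge (m(T) - 2)/(k-2) \ge (k/3)/(k-2) \to 1/3$. If $m(T) \le k/3 + 1$, then some vertex $v$ has every component of $T - v$ of size at most $k/3 + 1$; since these components have total size $k-1$, for $k \ge 10$ this forces $d_T(v) \ge 3$, and hence the two largest components of $T-v$ sum to at most $2m(T) \le 2k/3 + 2$. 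This gives $m_2(T) \le 2k/3 + 2$, and item (\ref{twobranches}) then yields $\gamma_T \ge (k - m_2(T) - 1)/(k-2) \to 1/3$. Taking the infimum over $T \in \cT_k$ and the limit $k \to \infty$ gives $\gamma \ge 1/3$.

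For the upper bound $\gamma \le 2/3$, I will exhibit a family of trees $T_k$ on $k$ vertices with $\gamma_{T_k} \le 2/3 + o(1)$. A natural candidate is a balanced double broom consisting of a spine $P_{s+1}$ with $d$ pendant leaves attached at each endpoint, with $s \approx 2k/3$ and $d \approx k/6$; for this family the bipartition size $a(T_k)$, the matching number $\nu(T_k)$, and $\lfloor (\ell(T_k) - 2)/2 \rfloor$ are each approximately $k/3$, so the lower bounds from items (\ref{longestpath}), (\ref{matching}) and (\ref{bipartition}) of Proposition~\ref{parameter} all match at $\gamma_{T_k} \ge 2/3 - o(1)$, while items (\ref{largestbranch}) and (\ref{twobranches}) give strictly weaker lower bounds for this choice. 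The goal is then the matching upper bound $\ex_c(n, T_k) \le \frac{k-2}{3}\,n + o(n)$.

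The main obstacle is precisely this upper bound on $\ex_c(n, T_k)$. A naive greedy embedding of $T_k$ into a subgraph of $G$ of minimum degree $\Theta(k)$ does not work, since extending each leg may collide with previously placed vertices. A plausible strategy is a path-plus-extension argument: use Theorem~\ref{paths} to locate the spine of $T_k$ as a long path in $G$, and then exploit the surplus of edges (the excess of $e(G)$ over the Kopylov bound for that path length) to attach the required bristle fans at the two path endpoints. The technical difficulty is guaranteeing that the long path produced has second-to-last vertices with enough free neighbors outside the path to host the bristles; this likely requires a rotation--extension analysis of the extremal path configurations from Theorem~\ref{paths}, or, alternatively, switching to a different tree family (for instance one of bounded maximum degree) for which a Friedman--Pippenger-type embedding lemma applies more cleanly.
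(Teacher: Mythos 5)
Your lower bound argument is correct and is essentially the paper's: the paper proves $\ex_c(n,T)\ge\lfloor k/6\rfloor n$ by the same dichotomy on $m(T)$ versus $k/3$, using item (\ref{largestbranch}) in the first case and a ``path of cliques'' of block size $\lfloor k/3\rfloor$ in the second; your derivation of the second case from item (\ref{twobranches}) via $m_2(T)\le 2m(T)$ at a vertex attaining $m(T)$ is a clean and valid substitute for that ad hoc construction.

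The upper bound, however, has a genuine gap, and you have correctly located it yourself: to conclude $\gamma\le 2/3$ you must produce, for each large $k$, a tree on at least $k$ vertices with $\gamma_T\le 2/3+o(1)$, which requires an actual \emph{upper} bound $\ex_c(n,T_k)\le(\frac{k}{3}+o(k))n$. Observing that several lower-bound constructions for your double broom all plateau at $2/3-o(1)$ is evidence, not proof, and the rotation--extension programme you sketch for attaching bristle fans to both ends of a long path is left entirely unexecuted; indeed, controlling the neighbourhoods of \emph{both} endpoints of an embedded spine is exactly the hard part, and nothing in Theorem~\ref{paths} gives you that. The paper sidesteps this by choosing the one-sided broom $B(k,a)=S_{a-1,1,\dots,1}$ with $a=\lfloor k/3\rfloor$ and proving (Theorem~\ref{broom}(2)) that $\ex_c(n,B(k,a))=\lfloor\frac{(k-a)n}{2}\rfloor$ for large $n$: if some vertex $x$ has degree at least $k-1$, then $G-x$ has no path on $2a-3$ vertices (else one walks from $x$ into that path to build the long leg, and the remaining neighbours of $x$ supply the short legs), so Kopylov's bound on $G-x$ finishes; otherwise $\Delta(G)\le k-2$, and a connectivity argument along a long path starting at a hypothetical vertex of degree $\ge k-a+1$ again produces the broom, forcing $\Delta(G)\le k-a$. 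This yields $\gamma_{B(k,\lfloor k/3\rfloor)}\to 2/3$ with a short, fully elementary argument, and is the step your proposal is missing. If you want to salvage your approach, the simplest fix is to replace the double broom by this single broom and prove the degree/longest-path dichotomy above.
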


Finally, we determine $\ex_c(n,T)$ for all trees on $k$ vertices with $4\le k \le 6$ (note that there do not exist $P_3$-free connected graphs), and some trees on 7 vertices. We need some notation first.

$D_{a,b}$ denotes the \textit{double star} on $a+b+2$ vertices such that the two non-leaf vertices have degree $a+1$ and $b+1$. The \textit{star with $k$ leaves} is denoted by $S_k$. $S_{a_1,a_2,\dots, a_j}$ with $j\ge 3$ denotes the \textit{spider} obtained from $j$ paths with $a_1,a_2,\dots,a_j$ edges by identifying one endpoint of all paths. So $S_{a_1,a_2,\dots,a_j}$ has $1+\sum_{i=1}^ja_i$ vertices and maximum degree $j$. The only vertex of degree at least 3 is the \textit{center} of the spider, the maximal paths starting at the center are the \textit{legs} of the spider. $M_n$ denotes the matching on $n$ vertices (so if $n$ is odd, then an isolated vertex and $\lfloor \frac{n}{2}\rfloor$ isolated edges).

For graphs $H$ and $G$, their join is denoted by $H+G$, their disjoint union is denoted by $H\cup G$. For a graph $H$ and a positive integer $k$, $kH$ denotes the pairwise vertex-disjoint union of $k$ copies of $H$.

\vskip 0.3truecm

The values of $\ex_c(n,P_{k+1})$ were determined by Theorem \ref{paths}, and for $k\ge 3$, the statement $\ex_c(n,S_k)=\lfloor \frac{n(k-1)}{2}\rfloor$ follows from Proposition \ref{parameter} (\ref{maxdeg}) and that the degree-sum of an $S_k$-free graph is at most $n(k-1)$. So in the next theorem, we only list those trees that are neither paths nor stars. In particular, all trees have 5 or 6 vertices.

\begin{theorem}\label{smalltrees}
For non-star, non-path trees with 5 or 6 vertices, the following exact results are valid.
\begin{enumerate}
    \item\label{s211}
    For any\/ $T=S_{2,1,\dots,1}$ we have\/ $\ex_c(n,T)=\lfloor \frac{n(\Delta(T)-1)}{2}\rfloor$ if\/ $n\ge |T|$. In particular,\/ $\ex_c(n,S_{2,1,1})=n$ if\/ $n\ge 5$ and\/ $\ex_c(n,S_{2,1,1,1})=\lfloor \frac{3n}{2}\rfloor$ if\/ $n\ge 6$.
    \item\label{d22}
    We have\/ $\ex_c(n,D_{2,2})=2n-4$ if\/ $n\ge 6$.
    \item\label{s311}
    We have\/ $\ex_c(n,S_{3,1,1})=\lfloor  \frac{3(n-1)}{2}\rfloor$ if\/ $n\ge 7$ and\/ $\ex(6,S_{3,1,1})=9$.
    \item\label{s221}
    We have\/ $\ex_c(n,S_{2,2,1})=2n-3$ if\/ $n\ge 6$.
\end{enumerate}
\end{theorem}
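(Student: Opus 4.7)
For each of the four parts, the lower bound follows by specialising a construction from Proposition~\ref{parameter}, and the upper bound requires a dedicated structural argument.

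For part~(\ref{s211}) the lower bound is Proposition~\ref{parameter}(\ref{maxdeg}). For the upper bound I would show that a connected $S_{2,1,\dots,1}$-free graph $G$ with $\Delta(G)\ge\Delta(T)$ must be a star: taking $v$ of maximum degree, either some neighbour $u$ of $v$ has a further neighbour outside $N(v)\cup\{v\}$ (by connectivity, since $|V(G)|\ge|T|$), or $V(G)=N(v)\cup\{v\}$ and some two neighbours of $v$ are adjacent, and in either case one obtains the length-$2$ leg, with $\Delta(T)-1$ further leaves coming from $N(v)$. Hence $G$ is a star (with $n-1\le\lfloor n(\Delta(T)-1)/2\rfloor$ edges) or $\Delta(G)\le\Delta(T)-1$ and the degree-sum bound suffices.

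For part~(\ref{d22}) the construction $K_{2,n-2}$ gives $2n-4$ edges by Proposition~\ref{parameter}(\ref{bipartition}). The key to the upper bound is that $G$ is $D_{2,2}$-free if and only if every edge $xy$ satisfies $\min\{d(x),d(y)\}\le 2$ or $|N(x)\cup N(y)|\le 5$. I would induct on $n$, with base case $n=6$ by direct analysis of $\Delta(G)\in\{3,4,5\}$. A leaf can be deleted losing one edge, and a non-cut vertex of degree $2$ can be deleted losing two. If $\delta(G)\ge 3$, every edge is heavy and the characterisation forces first $\Delta(G)\le 4$ and then that $N(v)\cup\{v\}$ is closed under neighbourhoods in $G$, forcing $G=K_4$ and contradicting $n\ge 6$. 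The remaining case of a degree-$2$ cut vertex $v$ splits $G$ into $G_1, G_2$ glued along $v$, with $|G_i|\ge 4$; combining the inductive hypothesis on large pieces ($|G_i|\ge 6$) with ad hoc bounds on small pieces ($|G_i|\in\{4,5\}$) gives $e(G_1)+e(G_2)\le 2n-4$. The main obstacle is tightness of the cut-vertex case, where the boundary-extremal graphs (such as two $K_4$'s with pendants glued at a common vertex for $n=9$) must be identified.

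For parts~(\ref{s311}) and~(\ref{s221}), the lower bounds come from Proposition~\ref{parameter}(\ref{largestbranch}) (the friendship graph of $\lfloor(n-1)/2\rfloor$ triangles through a common vertex, with $\lfloor 3(n-1)/2\rfloor$ edges) and Proposition~\ref{parameter}(\ref{matching}) (the join $K_2+\overline{K_{n-2}}$, with $2n-3$ edges). The exceptional value $\ex_c(6,S_{3,1,1})=9$ is achieved by $K_{3,3}$: any length-$3$ leg alternates between the two parts and uses two of the three neighbours of the chosen centre, leaving just one neighbour for the two required leaves. The upper bounds are proved by induction combined with local characterisations of $T$-freeness at high-degree vertices: $S_{3,1,1}$-freeness forbids a $P_3$ in $G-c$ starting at $N(c)$ while two further neighbours of $c$ remain untouched, and $S_{2,2,1}$-freeness forces a near-$K_2+\overline{K_{n-2}}$ structure at each vertex of degree $\ge 3$. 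The main obstacle throughout is the delicate case analysis at the base of induction and the identification of sporadic extremal graphs.
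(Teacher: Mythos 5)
Your lower bounds are all correct and match the paper's (connected nearly regular graphs, $K_{2,n-2}$, $K_1+M_{n-1}$ and $K_{3,3}$, $K_2+E_{n-2}$), and part~(\ref{s211}) is essentially the paper's proof. But for the upper bounds of parts~(\ref{d22}), (\ref{s311}) and (\ref{s221}) what you have is a plan, not a proof, and the places you label ``obstacles'' are exactly where the work lies. In part~(\ref{d22}), your treatment of $\delta(G)\ge 3$ is stated incorrectly: a connected $D_{2,2}$-free graph with $\delta\ge 3$ need not be $K_4$ (e.g.\ $K_5$, or $K_5$ minus a matching, qualify), and the ``$N[v]$ closed under neighbourhoods'' claim only follows from the edge characterisation at a vertex of degree $4$ whose neighbours have degree $3$; the $3$-regular subcase needs a separate argument (every edge must lie in a triangle, which forces $K_4$). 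These repairs can be made, and if you make them you actually get a \emph{shorter} route than the paper, which instead proves that degree-$4$ vertices form an independent set, counts $4q\le 3p$ to handle $n\ge 11$, and then does separate case analyses for $n=7,8,9,10$. The degree-$2$ cut-vertex case is also not closed: when the small side $A$ has $|A|=5$, the naive bound $e(G[A\cup\{v\}])\le 1+\binom{5}{2}$ combined with induction on the other side only gives $2n-3$, so you must show $G[A]\ne K_5$ and in fact push the count down further, which is a genuine sub-argument (the paper does this by finding a $D_{2,2}$ inside $A\cup\{v\}$ using a $C_4$ in $G[A\setminus\{w\}]$).

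For parts~(\ref{s311}) and (\ref{s221}) there is no argument to evaluate. Saying that ``$S_{3,1,1}$-freeness forbids a $P_3$ in $G-c$ starting at $N(c)$ while two further neighbours remain untouched'' is a restatement of the definition, not a structure theorem, and ``induction with local characterisations at high-degree vertices'' does not by itself produce the bound $\lfloor 3(n-1)/2\rfloor$, whose extremal graph has a vertex of degree $n-1$. The paper's proof of (\ref{s311}) runs through a longest cycle $C$: if $C$ is Hamiltonian all chords are excluded (except antipodal ones when $n=6$, giving $K_{3,3}$); lengths $5\le |C|<n$ are impossible; $|C|=4$ forces $n=6$ with few edges; and if every block is $K_2$ or $K_3$ one gets the recursion $f(n)\le\max\{f(n-1)+1,\ f(n-2)+3\}$, which yields $\lfloor 3(n-1)/2\rfloor$. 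For (\ref{s221}) the key device you are missing is how to handle a degree-$2$ cut vertex $v$ with neighbours $x,y$: the paper passes to $H=G-v+xy$, shows any copy of $S_{2,2,1}$ in $H$ can be rerouted through $v$ into $G$, and so gets $e(G)\le e(H)+1\le 2n-4$; without some such contraction trick the induction does not close. You should either supply these arguments or adopt the paper's.
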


\begin{table}[h!]
\begin{tabular}{ |c|c|c|c| } 
 \hline
 Number of vertices & Tree & $\ex_c(n,T)$ & Construction \\
 \hline
 4 & $P_4$ &  $n-1$ & $S_{n-1}$\\
 \hline
  & $S_3$ & $n$ & $C_n$ \\
 \hline
 5 & $P_5$ & $n$ & $K_1 + (K_2 \cup E_{n-3})$\\
 \hline
  & $S_4$ & $\lfloor \frac{3n}{2}\rfloor$ & (nearly) 3-regular\\
 \hline
  & $S_{2,1,1}$ & $n$ & $C_n$\\
 \hline
 6 & $P_6$ & $2n-3$ & $K_2+E_{n-2}$ \\
 \hline
  & $S_5$ & $2n$ & 4-regular\\
 \hline
  & $S_{2,1,1,1}$ & $\lfloor \frac{3n}{2}\rfloor$ & (nearly) 3-regular \\
 \hline
  & $S_{2,2,1}$ & $2n-3$ & $K_2+E_{n-2}$\\
 \hline
  & $S_{3,1,1}$ & $\lfloor \frac{3(n-1)}{2}\rfloor$ & $K_1+M_{n-1}$\\
 \hline
  & $D_{2,2}$ &  $2n-4$ & $K_{2,n-2}$\\
 \hline
\end{tabular}

\caption{The value of $\ex_c(n,T)$ for all trees up to 6 vertices}
\end{table}

\begin{table}[h!]
\begin{tabular}{ |c|c|c||c|c|c| } 
 \hline
 Tree & $\ex_c(n,T)$ & Construction & Tree & $\ex_c(n,T)$ & Construction \\
 \hline
 $S_6$ & $\lfloor\frac{5n}{2}\rfloor$ & (nearly) 5-regular & $P_7$ & $2n-2$ & $K_2+(E_{n-4}\cup K_2)$\\
 \hline
 $S_{4,1,1}$ & $\ge 2n-3$ & $K_2+E_{n-2}$ & $S_{3,2,1}$ & $2n-3$ & $K_2+E_{n-2}$ \\
 \hline
 $S_{3,1,1,1}$ & $\lfloor \frac{3n}{2}\rfloor $ & (nearly) 3-regular & $S_{2,1,1,1,1}$ & $2n$ & 4-regular\\ 
 \hline
 $S_{2,2,2}$ & $2n-2$ & $K_2+(E_{n-4}\cup K_2)$ & $S_{2,2,1,1}$ & $\ge 2n-3$ & $K_2+E_{n-2}$ \\
 \hline
  $D_{2,2}^*$ & $2n-3$ & $K_2+E_{n-2}$ & $D_{2,3}$ & $\ge 2n-4$ & $K_{2,n-2}$  \\
  \hline
  $SD_{2,2}$ & $\ge \frac{13n}{7}-O(1)$ & Prop. \ref{smalltrees} (2) & $D_{2,3}$ & $\ge 2n-2$ if $6|n-1$ & Prop \ref{smalltrees} (\ref{secondsmallest})\\
 \hline
\end{tabular}
\caption{Exact values and lower bounds on $\ex_c(n,T)$ for trees with 7 vertices}
\end{table}

Let $D^*_{2,2}$ be the tree obtained from $D_{2,2}$ by attaching a leaf to one leaf of $D_{2,2}$.

\begin{theorem}\label{d22*}
 We have\/ $ex_c(D^*_{2,2}) = 2n-3$ for all\/ $n\ge 7$, and\/ $ex_c(D^*_{2,2})={n \choose 2}$ for\/ $1\le n\le 6$.
 \end{theorem}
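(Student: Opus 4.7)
For $1 \le n \le 6$, since $D^*_{2,2}$ has $7$ vertices, $K_n$ contains no copy of $D^*_{2,2}$ and so $\ex_c(n,D^*_{2,2}) = \binom{n}{2}$. For $n \ge 7$, the lower bound $2n-3$ is supplied by Proposition~\ref{parameter}\,(\ref{matching}) applied with $\nu(D^*_{2,2}) = 3$, and is attained by $K_2 + E_{n-2}$; this graph is $D^*_{2,2}$-free because any embedding would force the two adjacent degree-$3$ tree-vertices $u,v$ of $D^*_{2,2}$ to be mapped to the two hubs, after which the tree-vertex $u_1$ (which needs a further neighbour $w$ distinct from the six other used vertices) is forced to be a non-hub whose only neighbours are the two hubs, leaving no valid choice for $w$.

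For the upper bound, let $G$ be a connected $D^*_{2,2}$-free graph on $n \ge 7$ vertices. If $G$ is $P_6$-free, Theorem~\ref{paths} applied with $k=5$ gives $e(G) \le \max\{n+2,\,2n-3\} = 2n-3$. So assume that $G$ contains a $P_6$ and set $S := \{v \in V(G) : d_G(v) \ge 3\}$. If $|S| \le 2$, then at most two vertices have degree larger than $2$, so $2e(G) = \sum_v d_G(v) \le 2(n-1) + 2(n-2) = 4n-6$ and thus $e(G) \le 2n - 3$. If instead $|S| \ge 2$ and $S$ is independent in $G$, then since $G[S]$ is edgeless and every vertex outside $S$ has degree at most $2$, a short calculation gives $e(G) \le 2(n-|S|) \le 2n - 4$.

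The only remaining possibility is $|S| \ge 3$ with some edge $uv \in E(G)$ whose endpoints both lie in $S$; I would rule this out by locating a copy of $D^*_{2,2}$ in $G$. The natural approach uses $uv$ as the central edge of $D^*_{2,2}$ and aims to pick four distinct extras $u_1,u_2 \in N(u)\setminus\{v\}$, $v_1',v_2' \in N(v)\setminus\{u\}$, together with a neighbour $w$ of $u_1$ avoiding the six other used vertices. When $|(N(u)\cup N(v))\setminus\{u,v\}| \ge 4$ the four extras always exist, and the third high-degree vertex $z \in S\setminus\{u,v\}$ (taken as $u_1$ when $z$ is adjacent to $u$, or exploited via a short path otherwise), combined with $n \ge 7$, typically supplies $w$ after at most a swap $u_1 \leftrightarrow u_2$.

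The main obstacle will be the degenerate configuration $|(N(u)\cup N(v))\setminus\{u,v\}| \le 3$, which forces $u$ and $v$ to have almost identical neighbourhoods. In that tight scenario I would relocate the central edge of $D^*_{2,2}$ to another edge of $G[S]$, for instance an edge on a shortest path from $\{u,v\}$ to $z$, and re-embed using the neighbourhood of $z$ together with vertices from the guaranteed $P_6$. The difficult part is the systematic but finite case analysis verifying that every such tight sub-configuration, under the hypothesis $n \ge 7$, does admit the desired embedding, thereby contradicting our assumption that $G$ is $D^*_{2,2}$-free.
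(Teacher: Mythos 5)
Your small cases, lower bound, and the reductions via $P_6$-freeness and via $|S|\le 2$ or $S$ independent are all fine, but the entire difficulty of the theorem sits in your final case, and there the plan has a genuine flaw: you drop the hypothesis $e(G)\ge 2n-2$ and try to produce a copy of $D^*_{2,2}$ from the structural assumptions alone (connected, $n\ge 7$, contains $P_6$, $|S|\ge 3$, and $G[S]$ contains an edge). That statement is false. Take $K_4$ on $\{a,b,c,d\}$ with a pendant path $d$--$e$--$f$--$g$ attached ($n=7$): it is connected, contains a $P_6$, has four vertices of degree at least $3$ spanning many edges, yet for every edge $uv$ one has $|(N(u)\cup N(v))\setminus\{u,v\}|\le 3$, so the graph contains no $D_{2,2}$ and a fortiori no $D^*_{2,2}$. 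So no amount of ``finite case analysis'' will close your last case as formulated; you must re-introduce the edge count there, at which point essentially nothing has been gained and the deferred analysis (which you explicitly do not carry out) is the whole problem.

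The paper's proof avoids this by leaning on the already-established value $\ex_c(n,D_{2,2})=2n-4$ (Theorem \ref{smalltrees}~(\ref{d22})): a connected $D^*_{2,2}$-free graph with more than $2n-4$ edges must contain a $D_{2,2}$ with central edge $xy$; $D^*_{2,2}$-freeness then forbids any vertex at distance exactly $2$ from $\{x,y\}$, so every vertex lies in $X=N(x)\setminus N[y]$, $Y=N(y)\setminus N[x]$ or $Z=N(x)\cap N(y)$, and a short argument shows these sets carry at most one edge among themselves, giving $e(G)\le 2n-3$ with $K_2+E_{n-2}$ as the extremal graph. I recommend you restructure your argument around that reduction (find a $D_{2,2}$ first, then analyse its neighbourhood) rather than attempting a direct embedding of $D^*_{2,2}$ from degree conditions.
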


\begin{theorem}\label{s222}
We have\/ $ex_c(S_{2,2,2}) = 2n-2$ for all\/ $n\ge 7$, and\/ $ex_c(S_{2,2,2})={n \choose 2}$ for\/ $1\le n\le 6$.
\end{theorem}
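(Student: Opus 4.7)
For $1 \le n \le 6$, the bound is immediate: $S_{2,2,2}$ has $7$ vertices, so every graph on at most $6$ vertices is $S_{2,2,2}$-free and $\ex_c(n, S_{2,2,2}) = \binom{n}{2}$, realized by $K_n$. For $n \ge 7$, the lower bound $\ex_c(n, S_{2,2,2}) \ge 2n-2$ comes from $G_0 := K_2 + (E_{n-4} \cup K_2)$, which has exactly $2n-2$ edges. To verify $G_0$ is $S_{2,2,2}$-free I would inspect each candidate center $c$ of degree $\ge 3$: for either universal vertex $u$, an $S_{2,2,2}$ at $u$ would require a matching of size $3$ in $G_0 - u$, but that graph has matching number $2$; for either vertex of the inner $K_2$ (of degree $3$, with neighbors consisting of the two universal vertices and its partner) the prescribed pendants would have to come from the neighborhoods of these three neighbors, all subsets of $\{u, v, \text{partner}\}$ once the center is removed, leaving fewer than three distinct options.

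For the upper bound I would split on the length $\ell(G)$ of a longest path in $G$. When $\ell(G) \le 6$, i.e.\ when $G$ is $P_7$-free, Theorem~\ref{paths} with $k = 6$ gives $e(G) \le \max\{n + 5, 2n - 2\}$, equal to $2n - 2$ for all $n \ge 7$. The extremal graph $G_0$ satisfies $\ell(G_0) = 6$, so this easy case already witnesses the sharp bound.

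The main case and principal obstacle is $\ell(G) \ge 7$. Fix a longest path $P = v_1 v_2 \cdots v_\ell$; by maximality, $v_1$ and $v_\ell$ have all their neighbors on $P$. The structural key would be the following observation: for every inner vertex $v_i$ with $3 \le i \le \ell - 2$ and every neighbor $w$ of $v_i$ with $w \notin \{v_{i-2}, v_{i-1}, v_{i+1}, v_{i+2}\}$, the $S_{2,2,2}$-freeness of $G$ forces $N_G(w) \subseteq \{v_{i-2}, v_{i-1}, v_i, v_{i+1}, v_{i+2}\}$; otherwise an $S_{2,2,2}$ centered at $v_i$ with arms $(v_{i-1}, v_{i-2})$, $(v_{i+1}, v_{i+2})$, $(w, w')$ appears, where $w'$ is any further neighbor of $w$ outside $\{v_{i-2}, \dots, v_{i+2}\}$. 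Applied with $w = v_j$ a chord endpoint, this additionally forbids long chords of $P$ (those with $|i - j| \ge 3$, except possibly near the endpoints of $P$). Combined with standard longest-path rotations, these restrictions allow us to bound separately the chords within $P$, the edges between $V(P)$ and $V(G) \setminus V(P)$, and the edges inside $V(G) \setminus V(P)$; the resulting edge-count gives $e(G) \le 2n - 2$, in fact strictly less, since the extremum is already attained in the previous case. The most delicate sub-case is $\ell(G) = 7$ with several off-path vertices attached at overlapping windows of $P$: here the neighborhood constraint must be applied with multiple choices of $i$ simultaneously, yielding that each off-path vertex contributes at most one edge beyond a spanning-tree backbone, which is the counting identity that ultimately closes the argument.
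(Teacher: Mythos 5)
Your small-$n$ cases, your lower-bound construction, and your first case are all fine, and the first case is a genuinely different (and slicker) reduction than the paper's: the paper never invokes Theorem~\ref{paths} here, whereas you observe that a $P_7$-free connected graph already satisfies $e(G)\le\max\{n+5,2n-2\}=2n-2$ for $n\ge 7$, which disposes of all $P_7$-free graphs in one line. (Minor slip in the lower bound: for a center $a$ in the inner $K_2$ the right point is that $N(b)\setminus\{a\}=\{u,v\}$ is exhausted by the other two legs, not that the neighborhoods of all three neighbors of $a$ lie in $\{u,v,b\}$ --- the universal vertices have large neighborhoods.)

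The case $\ell(G)\ge 7$, however, is where the entire difficulty of the theorem lives, and there your argument is only a sketch whose guiding claim is false. You assert that the count in this case comes out \emph{strictly} below $2n-2$ because ``the extremum is already attained in the previous case.'' It is not: there are extremal graphs with exactly $2n-2$ edges that contain $P_8$ --- for instance, delete three degree-$2$ vertices from $K_2+(E_{n-4}\cup K_2)$ and attach a $K_4$ block at a universal vertex $u$; a path $c_1c_2c_3u a b v y$ through the block and the old $K_4$ has $8$ vertices (these graphs appear in the remark following the paper's proof). So in the regime $\ell(G)\ge 7$ you must prove a \emph{tight} inequality, and the heuristics ``each off-path vertex contributes at most one edge beyond a spanning-tree backbone'' and ``bound the chords of $P$'' have to be carried out exactly, including the Hamiltonian-path-induced subgraph $G[V(P)]$, which in the example above itself has $2\ell-2$ edges. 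Your window observation (a neighbor $w\notin\{v_{i-2},\dots,v_{i+2}\}$ of $v_i$ has all its neighbors in that window) is correct and does yield useful structure (e.g.\ that $V(G)\setminus V(P)$ is independent and each off-path vertex has at most two, pairwise non-consecutive, non-terminal neighbors on $P$), but it does not by itself control the chords of $P$ near its endpoints, which is exactly where the $K_4$ endblocks of the extremal graphs sit. The paper's route --- saturation plus the degree-$2$ claim, then showing that $\delta(G)\ge 3$ forces all cycles to have length at most $4$ and every endblock to be a $K_4$, then peeling off $K_4$ endblocks with the recursions $f(n-3)+6$ and $f(n-6)+12$ (the latter needed for $n=8,9$ because $f(5)=10$ and $f(6)=15$ exceed $2n-2$) --- is precisely the machinery that handles these equality configurations, and nothing in your proposal substitutes for it. Also note that your base case $n=7$ with $\ell(G)=7$ (a Hamiltonian path) is not covered by Theorem~\ref{paths} and is left unproved.
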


\begin{theorem}\label{s321}
We have\/ $ex_c(S_{3,2,1}) = 2n-3$ for all\/ $n\ge 7$, and\/ $ex_c(S_{3,2,1})={n \choose 2}$ for\/ $1\le n\le 6$.
\end{theorem}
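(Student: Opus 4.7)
The small cases $1\le n\le 6$ are trivial, since $S_{3,2,1}$ has seven vertices, so any such $G$ is $S_{3,2,1}$-free and $\ex_c(n,S_{3,2,1})=\binom{n}{2}$. For $n\ge 7$ the lower bound is realised by $G=K_2+E_{n-2}$, which has $2n-3$ edges and avoids $S_{3,2,1}$. Indeed, since the stable-set vertices have degree $2$, the centre $c$ of any copy must lie in the $K_2$, say $c=x$ with the other $K_2$-vertex $y$; tracing the length-$3$ leg $c\,a_1a_2a_3$, either $a_1=y$ and then $a_2\in E_{n-2}$ forces $a_3=c$, a contradiction; or $a_1\in E_{n-2}$ and $a_2=y$, but then the length-$2$ leg $c\,b_1b_2$ forces $b_1\in E_{n-2}$ (since $a_2=y$ is used) and hence $b_2=y=a_2$, again a contradiction.

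For the upper bound, let $G$ be a connected $n$-vertex $S_{3,2,1}$-free graph, $n\ge 7$. If $G$ contains no $P_6$ then Theorem~\ref{paths} with $k=5$ gives $e(G)\le\max\{n+1,2n-3\}=2n-3$, done. Otherwise fix a $P_6$-subgraph $P=v_1v_2v_3v_4v_5v_6$. The structural heart of the argument is the claim
\[
N_G(v_3)\cup N_G(v_4)\subseteq V(P),
\]
since any neighbour $u\in N_G(v_3)\setminus V(P)$ would make $v_3$ the centre of an $S_{3,2,1}$ with legs $v_3v_4v_5v_6$, $v_3v_2v_1$ and $v_3u$, contradicting our assumption; the case of $v_4$ is symmetric.

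We now apply this claim to every $P_6$-subgraph of $G$. Taking a longest path $P^*=u_1\cdots u_\ell$ (so $\ell=\ell(G)\ge 6$) and letting $U=V(G)\setminus V(P^*)$: (i) only $u_1,u_2,u_{\ell-1},u_\ell$ can carry an edge to $U$, and (ii) for each interior index $3\le j\le \ell-2$, the overlapping $P_6$-subpaths of $P^*$ confine $N_G(u_j)$ to a window of at most five path-vertices around $u_j$, so $d_G(u_j)\le 4$. Maximality of $P^*$ further restricts how vertices of $U$ can attach at the path ends, and summing the degree contributions from edges inside $V(P^*)$, between $V(P^*)$ and $U$, and inside $U$ yields $e(G)\le 2n-3$. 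The principal obstacle is the case $\ell(G)=6$, where no overlapping $P_6$-subpaths of $P^*$ exist and the iterative argument is unavailable, forcing a finite case analysis according to which of $v_1,v_2,v_5,v_6$ attract edges to $U$ and which chords $V(P)$ carries; the base case $n=7$ is handled directly, noting that $e(G)\ge 12$ forces $\Delta(G)\ge 4$, from which three internally disjoint paths of lengths $3,2,1$ are extracted from a maximum-degree vertex among the remaining six vertices.
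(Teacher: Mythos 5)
Your lower bound and your small-$n$ observations are fine, and the opening structural claim (a neighbour of $v_3$ or $v_4$ outside a $P_6$ creates an $S_{3,2,1}$, hence interior vertices of a longest path send no edges outside it) is correct. But the upper bound is not actually proved. The decisive quantitative step is replaced by the assertion that ``summing the degree contributions \dots yields $e(G)\le 2n-3$,'' and the bound you do establish is nowhere near strong enough: knowing that each interior vertex $u_j$ ($3\le j\le \ell-2$) has degree at most $4$ only gives $e(G)\le 2n+O(1)$, which exceeds $2n-3$. To get $2n-3$ one must also exclude almost all short chords (e.g.\ $u_ju_{j+2}$ with $3\le j\le \ell-4$ already yields $S_{3,2,1}$ with legs $u_ju_{j+2}u_{j+3}u_{j+4}$, $u_ju_{j-1}u_{j-2}$, $u_ju_{j+1}$), control the edges of $u_1,u_2,u_{\ell-1},u_\ell$, and account for the edges inside and towards $U$; none of this is carried out. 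Worse, the case you flag as the ``principal obstacle,'' $\ell(G)=6$, is not a finite case analysis at all: $U$ can be arbitrarily large (all of its vertices attached to $u_2$ and $u_5$), so this case contains infinitely many graphs and is precisely where the extremal count must be pinned down. The base case $n=7$ is likewise only gestured at (``three internally disjoint paths \dots are extracted''), which is not automatic from $\Delta(G)\ge4$. There is also a small arithmetic slip in the $P_6$-free case: Theorem~\ref{paths} with $k=5$ gives $\max\{n+2,\,2n-3\}$, not $\max\{n+1,\,2n-3\}$; harmless here since the maximum is still $2n-3$ for $n\ge5$.

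For comparison, the paper avoids the longest-path analysis entirely: it inducts on $n$, first removing non-cutting vertices of degree at most $2$, then handling degree-$2$ cut-vertices $v$ by passing to $G-v+xy$ and showing any copy of $S_{3,2,1}$ there lifts back to $G$; once $\delta(G)\ge3$, it invokes the already-proved bound $\ex_c(n,S_{2,2,1})=2n-3$ to find a copy of $S_{2,2,1}$ and then grows it to an $S_{3,2,1}$ by a local neighbourhood analysis using $\delta(G)\ge3$. If you want to salvage your route, you would need to (a) prove the chord exclusions sketched above so that long paths force $e(G)$ well below $2n-3$, and (b) give a complete argument for $\ell(G)\in\{5,6\}$ with $U$ of unbounded size; alternatively, adopt the paper's reduction to the $S_{2,2,1}$ case, which shortcuts all of the counting.
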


\begin{theorem}\label{s31dots1}
For any\/ $T=S_{3,1,\dots,1}$ with\/ $\Delta(T)\ge 4$, we have\/ $\ex_c(n,T)=\lfloor \frac{(\Delta(T)-1)n}{2}\rfloor$ if\/ $n$ is large enough.
\end{theorem}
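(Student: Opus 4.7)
Write $d:=\Delta(T)$, so that $T=S_{3,1,\dots,1}$ has $d+3$ vertices, a unique vertex of degree $d$ with $d-1$ leaf neighbors and one pendant path of length $3$. The lower bound $\ex_c(n,T)\ge\lfloor n(d-1)/2\rfloor$ is already given by Proposition~\ref{parameter}(\ref{maxdeg}): any connected (nearly) $(d-1)$-regular graph on $n$ vertices is $T$-free (since $T$ has a vertex of degree $d$) and realizes the bound.

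For the upper bound, let $G$ be connected and $T$-free with $e(G)>\lfloor n(d-1)/2\rfloor$, assuming $n$ is large. The degree sum forces $\Delta(G)\ge d$, so fix $v$ of maximum degree and set $A=N(v)$, $B=V(G)\setminus N[v]$. The first step is the structural claim that \emph{$B$ is independent in $G$}. Indeed, suppose $w_1w_2$ is an edge inside $B$; taking a shortest $v$-to-$w_1$ path and relabeling if needed, we find $u\in A$ and $w,x\in B$ distinct with $uw,wx\in E(G)$. Then $v{-}u{-}w{-}x$ is a $P_4$ in $G$ with $w,x\notin A$, so choosing $d-1$ further vertices of $A\setminus\{u\}$ (possible because $|A|\ge d$) yields an embedding of $T$ centered at $v$, contradicting $T$-freeness. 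Consequently every $w\in B$ has $N_G(w)\subseteq A$, every vertex of $G$ lies within distance $2$ of $v$, and $e(G)=|A|+e(G[A])+\sum_{w\in B}d_G(w)$.

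Let $V_d=\{u\in V(G):d_G(u)=d\}$. When $|V_d|=1$, the degree sum gives $2e(G)\le d+(d-1)(n-1)=(d-1)n+1$, so a violation of the target bound forces $(d-1)n$ to be odd, $v$ to be the unique vertex of degree $d$, and every other vertex to have degree exactly $d-1$; then summing the identity $d_A(u)+d_B(u)=d-2$ over $u\in A$ gives $2e(G[A])+e_G(A,B)=d(d-2)$, while $e_G(A,B)=(d-1)|B|$, and the non-negativity of $e(G[A])$ forces $|B|\le d(d-2)/(d-1)<d-1$, whence $n\le 2d-1$, a contradiction for $n$ large. When $|V_d|\ge 2$, applying the structural lemma to a second $v'\in V_d$ shows that every vertex also lies within distance $2$ of $v'$; hence either $v'\in A$, or $v'\in B$ with $N(v')=A$ (since $d_G(v')=d=|A|$ in the latter case). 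The two overlapping ``radius-$2$'' structures severely restrict the possible edges, and one should be able either to find a copy of $T$ centered at whichever of $v,v'$ has the greater flexibility, or to bound $n$ in terms of $d$, again contradicting largeness.

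\emph{Main obstacle.} The case $|V_d|\ge 2$ is the crux: tracking the joint constraints from two (or more) overlapping radius-$2$ structures, and ruling out the family of highly symmetric configurations in which many degree-$d$ vertices share essentially the same neighborhood, is where the bulk of the technical work lies. Identifying the extremal configurations that maximize $e(G[A])+e_G(A,B)$ without creating $T$, and showing that for $n$ large enough their edge count cannot exceed $\lfloor(d-1)n/2\rfloor$, is the most delicate step of the argument.
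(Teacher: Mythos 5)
There is a genuine gap. Your lower bound and your first structural step (that $B=V(G)\setminus N[v]$ is independent, hence every vertex lies within distance $2$ of $v$) are correct and coincide with the paper's opening moves. But the heart of the upper bound is missing: the case you label $|V_d|\ge 2$ is exactly where you stop, offering only the hope that ``one should be able either to find a copy of $T$\dots or to bound $n$ in terms of $d$.'' That is the entire difficulty, not a detail, so the proof is not complete. There is also a smaller logical slip in the case you do treat: you define $V_d$ as the set of vertices of degree \emph{exactly} $d$, yet the inequality $2e(G)\le d+(d-1)(n-1)$ for $|V_d|=1$ presupposes $\Delta(G)=d$; if $\Delta(G)>d$ your case split does not cover the graph at all (the vertex $v$ you fixed is then not even in $V_d$).

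The paper avoids all of this by splitting on $\Delta(G)>\Delta(T)$ versus $\Delta(G)=\Delta(T)$ rather than on the number of high-degree vertices, and both cases are then short. If $\Delta(G)=\Delta(T)$, the independence of $B$ means every edge meets $N[v]$, so $e(G)\le(\Delta(T)+1)\Delta(T)=O_{d}(1)$, which already contradicts $e(G)\ge n-1$ for large $n$ --- no counting of degree-$d$ vertices is needed. If $\Delta(G)>\Delta(T)$, the surplus neighbor of $v$ gives enough room that any $x\in B$ with two neighbors $u,u'\in N(v)$ yields the long leg $v\hbox{-}u\hbox{-}x\hbox{-}u'$ plus $d-1$ spare leaves at $v$; hence every vertex of $B$ is pendant, the part $U\subseteq N(v)$ attached to $B$ spans no extra edges, and $G[N(v)\setminus U]$ is either on at most $\Delta(T)+1$ vertices or is a matching (a $P_3$ inside it would again give the long leg). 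This yields $e(G)\le\max\bigl\{n-1+\binom{\Delta(T)+1}{2},\tfrac{3(n-1)}{2}\bigr\}\le\lfloor\tfrac{(\Delta(T)-1)n}{2}\rfloor$ for $n$ large, using $\Delta(T)\ge 4$. I recommend abandoning the $|V_d|$ dichotomy and adopting this split; as it stands, your argument establishes the result only in a thin special case.
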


For a better overview, we include tables with previous results, our results and open cases for trees up to 7 vertices. $SD_{2,2}$ denotes the tree on 7 vertices obtained from the double star $D_{2,2}$ by subdividing the edge connecting its two centers.

\section{Constructions}

\begin{proof}[Proof of Proposition \ref{parameter}]
For all lower bounds we need constructions.

To see (\ref{longestpath}), we use the construction of Kopylov and Balister, Gy\H ori, Lehel, and Schelp: let $G_{n,k,s}$ be the graph defined by partitioning $V=X\cup Y\cup Z$ with $|X|=k-2s$, $|Y|=s$, and $|Z|=n-k+s$ such that $G[X \cup Y]$ is a clique and the set of all other edges of $G_{n,k,s}$ is $\{(yz):y\in Y, z\in Z\}$. If $k>2s$, then $G_{n,k,s}$ is $P_{k+1}$-free. Plugging $k=\ell(T)$ and $s=\lfloor \frac{\ell(T)-1}{2}\rfloor$ proves the claim.

For the general lower bound of (\ref{inducedpath}), we construct a graph $G(V,E)$ as follows: let $s:=\lfloor \frac{n}{k-p(T)-1}\rfloor$ and let $V$ be partitioned into $K_1\cup P_1\cup K_2\cup P_2 \cup \dots \cup K_s \cup P_s$ with $|K_i|=k-2p(T)-3$ for all $1\le i \le s$, $|P_i|=p(T)+1$ for all $1\le i<s$. $G[K_i]$ is a clique for all $i$. Every clique $K_i$ contains a special vertex $x_i$, and $G[\{x_i,x_{i+1}\}\cup P_i]$ is a path with end vertices $x_i$ and $x_{i+1}$ (with $x_{s+1}=x_1$). Then $G$ cannot contain $T$, as a partial copy of $T$ could contain the vertices of a $K_i$ and then at most $p(T)$ vertices from both of $P_{i-1}$ and 
$P_i$, so at least one vertex of $T$ cannot be embedded.

To see the furthermore part of (\ref{inducedpath}), we have the following construction $G$: we partition the vertex set of $G$ into $\{v\} \cup \bigcup_{i=1}^s(C_i\cup P_i)$, where $s=\lceil \frac{n-1}{k}\rceil$ with $|C_i|=k-p(T)-1$, $|P_i|=p(T)+1$ for all $1\le i < s$, and $|P_i|\le p(T)+1$ and if $|C_i|>0$, then $|P_i|=p(T)+1$. The edges of $G$ are defined such that $G[\{v\}\cup \bigcup_{i=1}^sP_i]$ is a spider with center $v$ and legs $P_i$, $G[C_i]$ is a clique and exactly one vertex of $C_i$ is connected to the leaf of the leg in $P_i$. The number of edges adjacent to $C_i \cup P_i$ is $\binom{k-p(T)-1)}{2}+p(T)+2$, therefore $e(G)$ is as claimed. Finally, to see that $G$ is $T$-free, observe that as $T$ contains at least two vertices of degree at least 3, if $G$ contained a copy of $T$, then this copy should contain a vertex $u$ from one of the $C_i$s. Also, such a copy cannot contain all vertices of $P_i$ as $p(T)<|P_i|$. Therefore, the vertices of the copy of $T$ should be contained in $|C_i|+|P_i|-1<k$ vertices - a contradiction.

To see (\ref{maxdeg}), it is known that  there exist connected $k$-regular graphs on $n$ vertices if $nk$ is even and there exist connected $n$-vertex graphs with all but one vertex having degree $k$ and the remaining vertex degree $k-1$ if $nk$ is odd. A connected $(\Delta(T)-1)$-regular  or nearly $(\Delta(T)-1)$-regular graph clearly does not contain $T$.

The lower bound of (\ref{matching}) is shown by $K_{\nu(T)-1}+E_{n-\nu(T)+1}$ that has matching number $\nu(T)-1$ and therefore cannot contain $T$.

The lower bound of (\ref{secondsmallest}) is shown by the following construction of a connected $n$-vertex $T$-free graph $G$: we partition the vertex set of $G$ into $\{v\}\cup \bigcup_{i=1}^{\lceil \frac{n-1}{k-1}\rceil}(A_i \cup \{x_i\})$ with $|A_i|=k-2$ for all $i=1,2,\dots, \lfloor \frac{n-1}{k-1}\rfloor$. The edges of $G$ are defined as follows: $G[A_i]$ is a clique, $v$ is adjacent to all $x_i$, and $x_i$ is adjacent to $\delta_2(T)-2$ vertices of $A_i$, so $d_G(x_i)=\delta_2(T)-1$. We claim that $G$ is $T$-free. Indeed, as $G-v$ has components of size at most $k-1$, a copy of $T$ must contain $v$. As $T$ is not a star, at least one of $v$'s neighbors is not a leaf and so its degree should be at least $\delta_2(T)$. But all $v$'s neighbors are $x_i$ vertices that have degree $\delta_2(T)-1$ in $G$.

The construction yielding the lower bound of (\ref{bipartition}) is $K_{a-1,n-a+1}$ as it does not contain bipartite graphs with both parts having at least $a$ vertices.

The construction yielding the lower bound of (\ref{largestbranch}) is $G=K_1+(rK_{m(T)-1}\cup K_s)$, where $r=\lfloor \frac{n-1}{m(T)-1}\rfloor$ and $s\ge 0$. Indeed, if $G$ contained a copy of $T$, then this copy should contain the vertex $v$ of $K_1$ as otherwise $T$ would be contained in $m(T)-1$ vertices. But then we cannot embed the largest branch pending on $v$ as it has size at least $m(T)$. 

To obtain the construction yielding the lower bound of (\ref{twobranches}), we partition the vertex set to $A_1,A_2,\dots,A_s,A_{s+1}$ with $s=\lfloor \frac{n}{k-m_2(T)}\rfloor$ and $|A_i|=k-m_2(T)$ for all $i=1,2,\dots,s$. As $T$ is not a path, we have $k-m_2(T)\ge 2$, so in each $A_i$ we can pick two distinct vertices $x_i,y_i$, maybe with the exception of $A_{s+1}$. Then we define $G$ as a ``cycle of cliques", so $G[A_i]$ is a clique for all $i$, and $x_iy_{i+1}$ is an edge (formally there should be three cases depending whether $A_{s+1}$ has size 0, 1, or at least 2). To see that $G$ is $T$-free, consider the vertex $v$ with $m_2(T)=m_{T,2}(v)$, i.e.\ the largest two components $C_1,C_2$ in $T-v$ have a total size of $m_2(T)$. Suppose $G$ contains a copy of $T$ and the vertex playing the role of $v$ belongs to $A_i$. Then, as there are only two edges leaving $A_i$, $T$ apart from two components of $T-v$ must be embedded into $A_i$. Moreover, since the two edges leave from distinct vertices, at least one vertex of the two exceptional components must also be embedded to $A_i$. So $A_i$ should contain at least $k-m_2(T)+1$ vertices --- a contradiction. (If $i=s+1$ and $x_i=y_i$, then we have the same contradiction, as then $A_{s+1}$ should contain at least $k-m_2(T)$ vertices, but $A_{s+1}$ is strictly smaller than that.)

The construction yielding the lower bound of (\ref{weight}) is $K_{w(T)-1,n-w(T)+1}$ as all its edges have weight $w(T)-1$ and thus $K_{w(T)-1,n-w(T)+1}$ cannot contain $T$. 
\end{proof}

\section{Proofs}

We start by proving Theorem \ref{gamma}. It will be a consequence of the following two results.

\begin{theorem}\label{generalbound}
For any tree\/ $T$ on\/ $k$ vertices, we have\/ $ex_c(n,T)\ge \lfloor\frac{k}{6}\rfloor n$ if\/ $n$ is large enough.
\end{theorem}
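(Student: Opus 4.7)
The plan is to choose, for each tree $T$ on $k$ vertices, a construction from Proposition~\ref{parameter} that yields at least $\lfloor k/6\rfloor n$ edges once $n$ is large.

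I first prove the structural lemma that $\ell(T) \ge p(T) + 2$ whenever $T$ is not a path. Indeed, take a maximal path $P$ of vertices of $T$ of degree at most~$2$; by maximality of $P$ any neighbor of an endpoint of $P$ lying outside $P$ has degree at least~$3$ in $T$, and from such a neighbor one further step into the tree is always available unless $T$ itself equals $P$.

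The main case analysis is on $\ell(T)$. If $\ell(T) \ge k/3+2$, construction~(\ref{longestpath}) gives $\lfloor(\ell(T)-2)/2\rfloor n - O(k^2)$, which is $\ge \lfloor k/6\rfloor n$ for $n$ large. Otherwise $\ell(T) \le k/3+1$, and the lemma forces $p(T) \le k/3-1$ (paths being handled by the previous case). I then split further. If $T$ has at least two vertices of degree $\ge 3$, the \emph{furthermore} clause of construction~(\ref{inducedpath}) produces roughly $\frac{\binom{k-p-1}{2}+p+2}{k}\,n \ge \frac{2k}{9}\,n$ edges, comfortably above the target since $k-p-1 \ge 2k/3$. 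Otherwise $T$ has at most one vertex of degree $\ge 3$, so it is a star or a spider $S_{a_1,\ldots,a_j}$, and the bound $a_1+a_2 \le k/3$ combined with $\sum a_i = k-1$ is easily seen to force $j \ge 5$. For such a spider the matching number satisfies $\nu(T) \ge (k+1-j)/2$ (by a standard leaf-stripping calculation, with the lower bound attained when all $a_i$ are odd), so if $j \le 2k/3-1$ then $\nu(T) \ge \lceil k/6 \rceil + 1$ and construction~(\ref{matching}) suffices; and if $j \ge 2k/3$ then $\Delta(T)=j$ is large enough that construction~(\ref{maxdeg}) suffices.

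The main obstacle I expect is the spider subcase: simultaneously tracking $j$, $\nu(T)$ and $\Delta(T)$ under the constraint $a_1+a_2 \le k/3$ involves several interlocking inequalities, and I must verify that the split $j \le 2k/3-1$ versus $j \ge 2k/3$ truly covers all admissible configurations, and that the floors and ceilings do not create gaps at the boundary. A handful of small values of $k$ (where $\lfloor k/6\rfloor$ is most sensitive) can then be checked directly.
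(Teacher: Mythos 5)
Your route is genuinely different from the paper's (which splits on whether $m(T)>\lfloor k/3\rfloor$ and uses Proposition \ref{parameter} (\ref{largestbranch}) together with a bespoke ``path of cliques'' construction), and your structural lemma $\ell(T)\ge p(T)+2$ for non-paths is correct. However, there is a recurring genuine gap. The theorem claims $\ex_c(n,T)\ge\lfloor k/6\rfloor n$ with no additive loss, so a lower bound of the form $an-c$ with $c=c(k)>0$ suffices only when $a>\lfloor k/6\rfloor$ strictly; ``for $n$ large'' does not absorb the constant when $a=\lfloor k/6\rfloor$. Your first case only guarantees $\lfloor(\ell(T)-2)/2\rfloor\ge\lfloor k/6\rfloor$, and equality really occurs: for $T=S_{3,2,2,2,2}$ we have $k=12$ and $\ell(T)=6=k/3+2$, and Proposition \ref{parameter} (\ref{longestpath}) gives only $\binom{3}{2}+2(n-3)=2n-3<2n=\lfloor k/6\rfloor n$, for every $n$. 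The same defect hits the matching subcase: for the spider $S_{3,3,3,1,\dots,1}$ with eight legs of length one ($k=18$, $j=11=2k/3-1$) one has $\nu(T)=4=\lceil k/6\rceil+1$, and Proposition \ref{parameter} (\ref{matching}) yields $3(n-3)+3=3n-6<3n$. (In both examples Proposition \ref{parameter} (\ref{maxdeg}) rescues the tree, but that lies outside your designated case.) Finally, the split ``$j\le 2k/3-1$ versus $j\ge 2k/3$'' that you yourself flag does omit one integer value of $j$ whenever $3\nmid k$; that value happens to be covered by Proposition \ref{parameter} (\ref{maxdeg}), but this must be argued.

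The paper avoids all of this by arranging, in each case, a coefficient surplus of at least $\tfrac12$ over $\lfloor k/6\rfloor$ (for instance $(\lfloor k/3\rfloor+1)/2\ge\lfloor k/6\rfloor+\tfrac12$ in its first case), so that the surplus, being linear in $n$, swallows every additive constant. To repair your argument you would either need to shift your thresholds so that each invoked construction has leading coefficient at least $\lfloor k/6\rfloor+\tfrac12$ (and then re-verify that the remaining cases still close), or explicitly patch each boundary configuration with an alternative construction such as Proposition \ref{parameter} (\ref{maxdeg}). As written, the proof does not establish the stated inequality for concrete trees such as $S_{3,2,2,2,2}$.
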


\begin{proof} \
\textsc{Case I: } $m(T)>\lfloor k/3\rfloor$.

\vskip 0.2truecm

Then by Proposition \ref{parameter} (\ref{largestbranch}) we have $$\ex_c(n,T)\ge n-1+ \left\lfloor \frac{n-1}{m(T)-1}\right\rfloor \binom{m(T)-1}{2}\ge (n-1)\left(1 +\frac{\lfloor k/3\rfloor-1}{2}\right)\ge n\left\lfloor\frac{k}{6}\right\rfloor,$$
if $n$ is large enough.

\vskip 0.3truecm 

\textsc{Case II: } $m(T)\le \lfloor k/3\rfloor$.

\vskip 0.2truecm

Let $v$ be a vertex such that $T-v$ contains only components of size at most $\lfloor k/3\rfloor$ for some vertex $v$. Let $n=s\lfloor k/3\rfloor +r$ with $r<\lfloor k/3\rfloor$. Consider the graph $G$ on vertex set $A_1\cup A_2\cup\dots\cup A_{s+1}$ with $|A_i|=\lfloor k/3\rfloor$ for all $1\le i\le s$ and $|A_{s+1}|=r$ such that $G[A_i]$ is a clique for all $i$ and $G$ contains one edge between $A_i$ and $A_{i+1}$ for all $i$. Again $e(G)\ge  \lfloor\frac{ k}{6}\rfloor n$ and  $T$ cannot occur as a subgraph in $G$.

Indeed, if there was a copy of $T$ in $G$, then the block containing $v$ splits the path structure of the $A_i$s into two parts, and at least one of them should contain at least $(|T|-\lfloor k/3\rfloor)/2$ vertices of $T$, which is more than $\lfloor k/3\rfloor$, hence contradicting the choice of $v$.
\end{proof}

The broom, which we denote by $B(k,a)$, is the special spider $S_{a-1,1,1,\dots,1}$ on $k$ vertices.

\begin{theorem}\label{broom}\

\begin{enumerate}
    \item 
    For any\/ $a\le k-2$, we have\/ $\ex_c(n,B(k,a))\ge \max\{\lfloor \frac{(k-a)n}{2}\rfloor,\lfloor \frac{a-1}{2}\rfloor(n-\lfloor \frac{a-1}{2}\rfloor)\}$.
    \item
     For any\/ $a\le k/3$, we have\/ $\ex_c(n,B(k,a))= \lfloor \frac{(k-a)n}{2}\rfloor$ if\/ $n$ is large enough.
\end{enumerate}
\end{theorem}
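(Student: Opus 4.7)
The two lower bounds in~(1) follow directly from Proposition~\ref{parameter}. The center of the broom is its unique vertex of maximum degree, namely $k-a+1$, so Proposition~\ref{parameter}(\ref{maxdeg}) yields $\ex_c(n,B(k,a))\ge\lfloor(k-a)n/2\rfloor$, attained by any connected (nearly) $(k-a)$-regular graph. A direct computation gives $\nu(B(k,a))=\lceil a/2\rceil$ (match the center to one leaf and pair up edges along the long leg). Invoking Proposition~\ref{parameter}(\ref{matching}), or equivalently noting that $K_{\lfloor(a-1)/2\rfloor,\,n-\lfloor(a-1)/2\rfloor}$ has matching number $\lfloor(a-1)/2\rfloor<\nu(B(k,a))$ and is therefore $B(k,a)$-free, supplies the second bound.

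For~(2) the lower bound already follows from~(1): for $a\le k/3$ we have $a-1<k-a$, so $\lfloor(a-1)/2\rfloor(n-\lfloor(a-1)/2\rfloor)<\lfloor(k-a)n/2\rfloor$ for $n$ large, and the maximum reduces to $\lfloor(k-a)n/2\rfloor$. I plan to establish the matching upper bound by contradiction. Let $G$ be a connected $B(k,a)$-free graph on $n$ vertices with $e(G)>\lfloor(k-a)n/2\rfloor$. The average-degree hypothesis forces $\Delta(G)\ge k-a+1$. I split on $\ell(G)$. When $\ell(G)\le a$, the graph $G$ is $P_{a+1}$-free, so Theorem~\ref{paths} applied with path-length parameter $a$ yields $e(G)\le\lfloor(a-1)/2\rfloor\,n+O(a^2)$; since $a\le k/3$ gives $\lfloor(a-1)/2\rfloor<(k-a)/2$ with slack at least $1/2$, this is strictly less than $(k-a)n/2$ for $n$ large, contradicting our assumption.

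When $\ell(G)\ge a+1$, I take a longest path $P=v_0v_1\cdots v_\ell$ and aim to exhibit $B(k,a)$ centered at a vertex of $P$. The clean sub-case is when some $v_i$ with either $i\ge a-1$ or $i\le\ell-a+1$ satisfies $d(v_i)\ge k-1$: the corresponding segment of $P$ of length $a-1$ then serves as the long leg, and since at most $a-1$ of its vertices lie in $N(v_i)$, at least $d(v_i)-(a-1)\ge k-a$ further neighbors of $v_i$ remain as leaves, producing $B(k,a)$. The main obstacle is the remaining sub-case where every vertex of $P$ has degree at most $k-2$; here I would apply P\'osa rotations at $v_0$ and $v_\ell$ to generate a family of longest paths whose endpoints must have all their neighbors on $V(P)$, and then use a counting argument that combines this confinement with the global density $e(G)>(k-a)n/2$ to force either a high-degree vertex in a useful position on some rotated longest path (reducing to the clean sub-case) or an extension of $P$ contradicting maximality. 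The hypothesis $a\le k/3$ (equivalently $k-a\ge 2a$) provides the slack that makes broom-extraction feasible once a suitable high-degree vertex is located; executing the P\'osa-plus-counting step cleanly is the technical heart of the argument.
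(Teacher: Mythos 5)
Part~(1) of your proposal is correct. For the second lower bound you route through the matching number ($\nu(B(k,a))=\lceil a/2\rceil$, so $K_{\lfloor(a-1)/2\rfloor,\,n-\lfloor(a-1)/2\rfloor}$ is $B(k,a)$-free), whereas the paper invokes the longest-path construction of Proposition~\ref{parameter}(\ref{longestpath}) using $\ell(B(k,a))=a+1$; both give the stated bound (yours even with a small additive surplus), so this is a harmless deviation. The reduction of the lower bound in~(2) to~(1) and the case $\ell(G)\le a$ of the upper bound (via Theorem~\ref{paths}) are also fine.

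The upper bound in~(2) is where there is a genuine gap. After forcing $\Delta(G)\ge k-a+1$, your ``clean sub-case'' only covers a vertex of degree at least $k-1$ sitting in a convenient position on one fixed longest path; you need degree $\ge k-1$ there because a vertex of degree only $k-a+1$ could have all $a-1$ vertices of the prospective long leg among its neighbors, leaving just $k-2a+2<k-a$ leaves. The entire regime $k-a+1\le d(x)\le k-2$ --- which is exactly what must be excluded to get $\Delta(G)\le k-a$ and hence $e(G)\le\lfloor(k-a)n/2\rfloor$ --- is deferred to an unspecified P\'osa-rotation-plus-counting argument that you yourself flag as the unexecuted ``technical heart.'' As written, the proof does not close, and it is not clear that rotations (which control endpoints of longest paths, not the location of high-degree vertices) would close it. The paper's resolution is much more elementary and is the idea you are missing: it splits on $\Delta(G)$ rather than on a longest path. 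If some $x$ has $d_G(x)\ge k-1$, then $G-x$ must be $P_{2a-3}$-free (any such path, joined to $x$ by connectivity, yields a long leg on one of its two halves plus $k-a$ leftover neighbors of $x$ as leaves), and Theorem~\ref{paths} bounds $e(G)\le n-1+(a-3)n\le\lfloor\frac{(k-a)n}{2}\rfloor$ using $a\le k/3$. Otherwise $\Delta(G)\le k-2$, and then for $n$ large \emph{every} vertex $x$ is the endpoint of a path on $a\cdot k$ vertices (a DFS tree from $x$ has bounded branching, hence large depth); if moreover $d_G(x)\ge k-a+1$, a pigeonhole along that path produces a neighbor $y_i$ of $x$ followed or preceded by $a-2$ consecutive non-neighbors of $x$, giving the long leg, with $d_G(x)-1\ge k-a$ remaining neighbors as leaves. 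This forces $\Delta(G)\le k-a$ with no rotations and no global counting. You should replace your sketched sub-case with an argument of this kind (or fully execute your own).
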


\begin{proof}
The lower bound of $\lfloor \frac{(k-a)n}{2}\rfloor$ follows from Proposition \ref{parameter} (3), while, as $\ell(B(k,a))=a+1$, Proposition \ref{parameter} (1) yields the lower bound $\lfloor \frac{a-1}{2}\rfloor(n-\lfloor \frac{a-1}{2}\rfloor)$.

To see the upper bound of (2), let $G(V,E)$ be an $n$-vertex $B(k,a)$-free graph with $a\le k/3$. Assume first that there exists a vertex $x$ with $d_G(x)\ge k-1$. We claim that $G[V\setminus \{x\}]$ does not contain a path on $2a-3$ vertices.
Indeed, suppose to the contrary that $y_1,y_2,\dots,y_{2a-3}$ is a path in $G[V\setminus \{x\}]$. Then as $G$ is connected, there exists a path $P$ from $x$ to some $y_j$ that does not contain any other $y_i$. Then either $x,P,y_j,y_{j-1},\dots,y_1$ or $x,P,y_j,y_{j+1},\dots,y_{2a-3}$ contains at least $a$ vertices. So $x$ and the first $a-1$ of them together with the other neighbors of $x$ form a copy of $B(k,a)$ --- a contradiction. Theorem \ref{paths} implies that if $n$ is large enough, then $e(G)\le n-1 + e(G-x)\le n-1+\lfloor \frac{2a-5}{2}\rfloor n\le an \le \lfloor \frac{k-a}{2}n\rfloor$ as $a\le k/3$. This finishes the proof in this case.

Assume finally that $\Delta(G)\le k-2$. Then if $n$ is large enough, every vertex $x$ of $G$ is the endpoint of a path on $a\cdot k$ vertices, since $G$ is connected and have maximum degree at most $k-2$. Suppose towards a contradiction that $G$ contains a vertex $x$ with $d_G(x)=d\ge k-a+1$. Let $z_1,z_2,\dots z_d$ be the neighbors of $x$ and let $x,y_2,y_3,\dots,y_{a\cdot k}$ be a path $P$. Then $y_2$ is one of the $z_j$'s, and as $d\le k-2$, there must exist $z_j$ such that $z_j\in P$, say $z_j=y_i$ and either $y_{i-1},y_{i-2},\dots,y_{i-a+2}$ or $y_{i+1},y_{i+2},\dots,y_{i+a-2}$ are not neighbors of $x$. Then $x$, these $y_i$s and the neighbors of $x$ form a $B(k,a)$. 

We obtained that $\Delta(G)\le k-a$ must hold, which implies $e(G)\le \lfloor \frac{(k-a)n}{2}\rfloor$ as claimed.
\end{proof}

\vskip 0.3truecm

\begin{proof}[Proof of Theorem \ref{gamma}]
 The lower bound follows from Theorem \ref{generalbound}, the upper bound from Theorem \ref{broom} (2) with taking $a=\lfloor k/3\rfloor$.
\end{proof}

\vskip 0.3truecm

We continue by proving Theorem \ref{smalltrees}. We restate and prove its parts separately.

\begin{theorem}
For\/ $T=S_{2,1,\dots,1}$, we have\/ $\ex_c(n,T)=\lfloor \frac{n(\Delta(T)-1)}{2}\rfloor$.
\end{theorem}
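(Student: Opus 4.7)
The plan is to establish the matching lower and upper bounds separately.

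The lower bound $\ex_c(n,T)\ge \lfloor n(\Delta(T)-1)/2\rfloor$ is delivered immediately by Proposition~\ref{parameter}~(\ref{maxdeg}): a connected (nearly) $(\Delta(T)-1)$-regular $n$-vertex graph exists, and having maximum degree strictly less than $\Delta(T)$, it cannot contain $T$.

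For the upper bound, set $j:=\Delta(T)$, so that $|T|=j+2$ and $n\ge j+2$. Let $G$ be a connected $n$-vertex $T$-free graph. The goal is to show that either $\Delta(G)\le j-1$, in which case the handshake lemma yields $e(G)\le\lfloor n(j-1)/2\rfloor$ directly, or else $G$ has so few edges that the same bound holds. Suppose, for contradiction, that some vertex $v$ satisfies $\deg_G(v)\ge j$. I would split the argument into two subcases depending on whether $v$ is universal.

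If $\deg_G(v)\le n-2$, then some vertex lies outside $N[v]$, and the first edge of a shortest path from $v$ to it yields a neighbor $u$ of $v$ together with a neighbor $w$ of $u$ with $w\notin N[v]$. Embed $T$ into $G$ by mapping its center to $v$, the internal vertex of its length-$2$ leg to $u$, the endpoint of that leg to $w$, and any $j-1$ further vertices of $N(v)\setminus\{u\}$ to the remaining leaves. Because $w\notin N[v]$, these $j+2$ vertices are pairwise distinct, which contradicts $T$-freeness. If instead $\deg_G(v)=n-1$, then $v$ is universal; if $G-v$ contained any edge $uw$, the analogous embedding, with the $j-1$ further neighbors of $v$ now drawn from $V\setminus\{v,u,w\}$ (a set of size $n-3\ge j-1$), would again give a copy of $T$. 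Hence $G-v$ is edgeless, $G\cong K_{1,n-1}$, and $e(G)=n-1\le\lfloor n(j-1)/2\rfloor$ since $j\ge 3$.

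The only delicate point is this exceptional case $G\cong K_{1,n-1}$, where $\Delta(G)=n-1$ can exceed $j-1$ by a lot, but the graph is sparse enough that the edge inequality is still satisfied after checking $n-1\le\lfloor n(j-1)/2\rfloor$ for $j\ge 3$. Everything else is a direct embedding argument that exploits the fact that $T$ has a unique vertex of degree~$2$ together with the connectivity of $G$.
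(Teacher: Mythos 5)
Your proof is correct and follows essentially the same route as the paper: both arguments show that a vertex of degree at least $\Delta(T)$ forces $G$ to be the star $K_{1,n-1}$ (by embedding the length-two leg either through a vertex at distance two or through an edge inside the neighborhood), and then conclude via the handshake lemma in the remaining case $\Delta(G)\le\Delta(T)-1$. The only cosmetic difference is that you split on whether the high-degree vertex is universal, while the paper first deduces $N[x]=V(G)$ from connectivity; the content is identical.
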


\begin{proof}
The constructions giving the lower bounds are connected (nearly) regular graphs of degree $\Delta(T)-1$.

If $T=S_{2,1,1,\dots,1}$, then the upper bound proof is a special case of Theorem \ref{broom}, but for completeness, we give a simpler proof of this case.
If $G$ is a connected, $n$-vertex,  $T$-free graph and for some $x$ we have $d_G(x)\ge \Delta(T)$, then $G$ is the star. Indeed, the neighbors of $x$ can be adjacent only to other neighbors of $x$, otherwise $T$ would be a subset of $G$. So by connectivity $N_G[x]=V(G)$. But then if there is at least one edge between two neighbors of $x$, then, as $|V(G)|\ge |V(T)|$, 
again $T$ would be a subgraph of $G$. The star has fewer edges than the claimed maximum, so to have $\ex_c(n,T)$ edges, $G$ must be (nearly) $(\Delta(T)-1)$-regular.
\end{proof}

\begin{theorem}
We have\/ $\ex_c(n,D_{2,2})=2n-4$ for any\/ $n\ge 6$.
\end{theorem}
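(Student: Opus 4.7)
The lower bound $\ex_c(n,D_{2,2})\ge 2n-4$ is the $a=3$ case of Proposition~\ref{parameter}~(\ref{bipartition}): both bipartition classes of $D_{2,2}$ have size three, so $K_{2,n-2}$ is a connected $D_{2,2}$-free graph with $2(n-2)$ edges.

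For the upper bound, fix a connected $D_{2,2}$-free graph $G$ on $n\ge 6$ vertices; by a leaf-removal induction on $n$, with $n=6$ handled directly as the base case, we may assume $\delta(G)\ge 2$. A direct count shows that an edge $uv$ is the spine of a $D_{2,2}$-copy iff $d(u),d(v)\ge 3$ and $d(u)+d(v)\ge 6+|N(u)\cap N(v)|$; combined with the trivial $|N(u)\cap N(v)|\le d(u)-1$ this forces $d(u),d(v)\le 4$ for every edge whose endpoints both have degree at least $3$. Set $H=\{v:d(v)\ge 3\}$, $L=V\setminus H$, and split $H=H_0\sqcup H_1$ with $H_1=\{v\in H:N(v)\cap H\ne\emptyset\}$. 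Then $d(v)\le 4$ for $v\in H_1$ and $d(w)=2$ for $w\in L$; a routine degree sum (using $e(H_0,L)\le 2|L|$) yields $e(G)\le 2(n-|H_0|)$, so we are done whenever $|H_0|\ge 2$.

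The case $|H_0|\le 1$ rests on the following closure argument: if $v\in H_1$ has $d(v)=4$ and all four neighbours of $v$ lie in $H_1$, then the $D_{2,2}$-condition forces $|N(u)\cap N(v)|=d(u)-1$ with equality for each $u\in N(v)$, whence $N(u)\subseteq N[v]$. Thus $N[v]$ is closed under taking neighbours, so $V=N[v]$ by connectedness and $n\le 5$, contradicting $n\ge 6$. Consequently every degree-$4$ vertex of $H_1$ has a neighbour in $L$, so the number $k_4$ of such vertices satisfies $k_4\le 2|L|$. Writing $\ell=|L|$ and $h_1=|H_1|$, the degree identity $2e(G)=3h_1+k_4+\sum_{v\in H_0}d(v)+2\ell$ combined with $k_4\le h_1$ yields $e(G)\le 2n-\ell$ when $|H_0|=0$, and $2e(G)\le 4n-4-\ell$ when $|H_0|=1$ (using $d(v_0)\le \ell$ for the unique $v_0\in H_0$, which also forces $\ell\ge 3$). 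These inequalities already give $e(G)\le 2n-4$ except in the residual subcase $|H_0|=0$, $\ell\in\{1,2,3\}$, where the rigid local structure forced by the $D_{2,2}$-condition on adjacent degree-$\ge 3$ vertices, together with parity of the degree sum, rules out all candidate configurations by a short direct check. This residual enumeration is the principal technical obstacle.
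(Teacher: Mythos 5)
Your overall strategy is genuinely different from the paper's and the parts you have written out are correct. The key lemma --- that an edge $uv$ is the central edge of a $D_{2,2}$ if and only if $d(u),d(v)\ge 3$ and $d(u)+d(v)\ge 6+|N(u)\cap N(v)|$ --- checks out (it is the defect form of choosing two disjoint pairs from $N(u)\setminus\{v\}$ and $N(v)\setminus\{u\}$), and it cleanly yields both that adjacent vertices of degree $\ge 3$ have degree $\le 4$ and the closure statement $N(u)\subseteq N[v]$ when equality is forced. The $H_0/H_1/L$ partition and the degree-sum inequalities are verified correctly, including the $|H_0|=1$ case where $\ell\ge 3$ pushes $e(G)$ below $2n-3$ by half-integrality. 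By contrast, the paper works with a minimal counterexample, first disposing of degree-$\le 2$ vertices via deletion and cut-point surgery, and then reduces to graphs whose degrees are all $3$ or $4$ with the degree-$4$ vertices independent, finishing with an explicit enumeration for $n=7,8,9,10$. Your route reaches an analogous bottleneck by a shorter and more transparent count.

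The gap is the one you name yourself: the residual enumeration is not done, and it is not a formality --- it is exactly where a hypothetical $(2n-3)$-edge counterexample would live. Concretely, with $|H_0|=0$ your identity $2e(G)=3n-\ell+k_4$ together with $k_4\le 2\ell$ and $e(G)\ge 2n-3$ forces $n+\ell-6\le k_4\le 2\ell$, hence $n\le\ell+6\le 9$; this leaves roughly half a dozen degree sequences for $n\in\{7,8,9\}$ (e.g.\ $n=7$, $\ell=1$, $k_4=2$; $n=9$, $\ell=3$, $k_4=6$), each of which must be excluded by a structural argument using the rigidity you identified (two adjacent degree-$4$ vertices satisfy $N[u]=N[v]$, a degree-$3$ neighbour $u$ of a degree-$4$ vertex $v$ has $N(u)\subseteq N[v]$, every degree-$4$ vertex needs an $L$-neighbour). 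That is a real piece of work comparable to the paper's own small-$n$ analysis, and until it is written the proof of the upper bound is incomplete. Two smaller points: the subcase $\ell=0$ also escapes your inequality $e(G)\le 2n-\ell$ and should be listed (it is easy: $k_4\le 2\ell=0$ makes $G$ $3$-regular, so $e(G)=3n/2\le 2n-4$ for $n\ge 8$ and $n=7$ is impossible by parity); and the base case $n=6$ still needs an actual verification (the paper does this by inspecting a table of $6$-vertex graphs).
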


\begin{proof}
To see the lower bound, observe that $K_{2,n-2}$ is $D_{2,2}$-free as $w(K_{2,n-2})=2$, while $w(D_{2,2})=3$.

 To see the upper bound, observe first that all connected graphs with 6 vertices and at least 9 edges contain  a copy of $D_{2,2}$ as can be checked in the table of graphs of \cite{H} on pages 222--224.
 
Suppose there exists a minimum counterexample: a connected graph $G$ on $n\ge 7$ vertices and $e(G)\ge 2n-3$ edges with no copy of $D_{2,2}$.  We consider several cases.
 
\vskip 0.3truecm 
 
\textsc{Case I: }  $\delta(G)\le 2$ and there is a vertex $v$ of degree at most 2 which is not a cut-point.

\vskip 0.2truecm

Delete a vertex $v$ of degree 1 or 2   to obtain a connected $H =G\setminus v$ with $|H|\ge 6$. By minimality  $e(H) \le  2(n-1) – 4$  and $2n-3\le e(G) \le e(H) +2 \le 2(n-1) – 4  +2 = 2n-4$, a contradiction.

\vskip 0.3truecm

\textsc{Case II: } $\delta(G)=2$ and every vertex of degree 2 is a cut-point.

\vskip 0.2truecm

Consider $v$ of degree 2 such that in $H =  G - v$ out of the two components $A$ and $B$, $|A|$ is as small as possible. Let $w$ be the vertex in $A$ adjacent to $v$ and let $z$ be the vertex in $B$ adjacent to $v$.  

If $|A|\ge 6$ then  by minimality of $G$,  $2n – 3 \le e(G) \le 2|A| -  4 +2|B| - 4  +2  =  2(|A|+|B| +1)  - 8  = 2n-8$, a contradiction. Otherwise  $3 \le |A| \le 5$ as $|A|\le 2$ would imply $\delta(G)=1$ and we were in Case I.
Also, $|A|\ge 4$ as $|A| = 3$ would imply that $A$ must contain a vertex of degree 2 which is not a cut-point  and we were in Case I again.

Suppose  $|A| = 5$. If $d_G(w)= 2$ then $|A|$ is not minimum, so in the induced subgraph on $A$ all vertices have degree at least 2 and $d_G(w)  \ge 3$.
But then the induced graph on $A$ either contains a vertex of degree 2  which is not a cut-point and we are in Case I or all degrees in $G[A \cup \{v\}]$ (except for $v$) are at least 3. Then one can find a copy of $D_{2,2}$ with $w$ being one of the centers and $v$ being a leaf pending from $w$. Indeed, by the degree condition, $G[A\setminus \{w\}]$ contains a $C_4$, so if $N(w)$ contains two non-neighbor vertices $x,y$ of this $C_4$, then $x$ can be the other center of the copy of $D_{2,2}$ and $y$ the other leaf pending from $w$. Otherwise $w$ has exactly two neighbors in $A$, and then by the degree condition $G[A\setminus \{w\}]$ is $K_4$ and it is trivial to embed $D_{2,2}$.

Finally suppose  $|A| = 4$.  As $|B| \ge |A|= 4$,   it follows that  $B^* =  B \cup \{ v , w\}$ has at least 6 vertices and $|B^*| = n-3$,  and hence by  minimality of $G$,  $e(B^*)$ contains at most $2(n – 3)  - 4$ edges  and together with at most 6 edges in $A$ gives $e(G)\le  2n -10 +6 = 2n – 4$ --- a contradiction.
 

\vskip 0.3truecm

\textsc{Case III: } $\delta(G) \ge 3$. 

\vskip 0.2truecm

If all vertices are of degree 3, we have $3n/2$ edges, which is at most $2n – 4$ for $n \ge 8$.  For $n = 7$ this is impossible by parity, hence  $\delta(G) \ge 3$ and  $\Delta(G) \ge 4$.
Consider an edge $e =xy$ with $d_G(y)  = \Delta(G)  \ge 4$  and $d_G(x)\ge 3$.

If $d_G(y) \ge 5$, then for $u,u'\in N(x)$ we have $|N(y)\setminus \{x,u,u'\}|\ge 2$, so $x$ and $y$ are centers of a copy of $D_{2,2}$.
If $d_G(y) = 4$ and $d_G(x) = 4$ then either $x$ and $y$ have distinct neighbors $s$  not in $N[y]$ and $t$ not in $N[x]$ and we find a copy of $D_{2,2}$ with centers $x,y$, or $x$ and $y$  are twins having the same neighbors  $a,b,c$ excluding themselves.
But as $|G| \ge 7$, at least one vertex, say $a$, has a neighbor $d$ not adjacent  to the other 4 vertices and then $a$ and $x$ can be centers of $D_{2, 2}$ with $y$ and $d$ pending from $a$.

So we can assume that all vertices have degree 3 or 4 and vertices of degree 4 form an independent set $Q$.  Let $P  = V  \setminus Q$,  and consider the bipartite  $G[P,Q]$ where $p +q  = n$,  $|P| = p$  and $|Q| = q$.   Clearly,  $4q= e(P,Q) \le 3p$.
 Hence $3n  = 3q +3p  \ge 7q$  and $q \le 3n/7$,  $p \ge 4n/7$.  But then 
 $$e(G) = \frac{4q +3p}{2}  \le \frac{12n/7 +  12n/7}{2}  = \frac{12n}{7}  < 2n- 3$$ for $n\ge 11$. So we are left with $n = 7, 8 ,9 ,10$.
 
 For $n = 7$:  $q \le 3n/7=3$ and $q$ must be an integer. If $q=3$, then $G = K_{4,3}$ containing $D_{2,2}$. The case $q = 2$ is impossible as the degree sum would be odd (by the number $p$ of odd-degree vertices). Hence $q = 1$ and $p = 6$. Consider a vertex $v$ of degree 4  and its neighbors  $a,b,c,d$ all of degree 3.  
If say  $a$ is  adjacent to a vertex outside $\{v, b,c,d \}$, then there is $D_{2,2}$. But as this holds for all of $a,b,c,d$  it means $A = \{  v,a,b,c,d \}$  has no neighbor in $V \setminus A$ and $G$ is not connected.     

For $n = 8$, we still have $q \le \lfloor \frac{3n}{7}\rfloor=3$ and $p \ge 5$. But $p = 5,7$ are impossible, again due to parity, hence $q = 2$ and $p  = 6$.  
Let $Q = \{ a,b \}$ be  the set of vertices of degree 4, and let $P = V\setminus Q$.   If some vertex $x$ in $P$ is adjacent to both $a$ and $b$, then  consider the only neighbor $z$ of $x$ in $P$. Here $a$ is adjacent to $x$ and three more vertices in $P$, so at least two vertices except $x$ and $z$ are neighbors of $a$ and $x$ can use $z$ and $b$ to obtain a copy of $D_{2,2}$ with centers $x$ and $a$. Hence every vertex in $P$ is adjacent to at most one vertex in $Q$, yielding $|P|\ge e(P,Q) =2|Q|$ --- a contradiction.   

For  $n = 9$, we have  $q \le \lfloor \frac{3n}{7}\rfloor=3$. The case $q = 2$ is impossible by parity and $q = 1$, $p = 8$ implies $e(G) = (4 +24)/2 = 14  =  2n  - 4$ as stated by the theorem. So only $q = 3$, $p = 6$ is to be checked.  
Let $Q=  \{ a,b,c \}$ be  the set of vertices of degree 4, and let $P = V \setminus Q$. If some vertex $v$ in $P$ has at least two neighbors in  $Q$, say $a,b$, then we have a copy of $D_{2,2}$ with centers $v$ and $a$, as all the four neighbors of $a$ are in $P$ and at most two of them belong to $N[x]$. So  every vertex in $P$ can have at most one neighbor in $Q$ and as in the previous case we have $|P|\ge e(P,Q)=4|Q|$ --- a contradiction.   

For $n = 10$,  $q \le \lfloor \frac{3n}{7}\rfloor=4$, and so parity of the degree sum implies $q = 4$ or $q = 2$. If $q = 2$ then $e(G)  = (8 + 24)/2  = 16= 2n-4$ as stated in the theorem,  so only $q = 4$, $p = 6$ remains  to be checked.

Let $Q = \{ a,b,c,d \}$  be the set of vertices of degree 4, and let $P = V \setminus Q$. If some vertex $v$ in $P$ has all its neighbors in $Q$, say $a,b,c$, then  we  obtain a copy of $D_{2,2}$ with centers $v$ and $a$. 
Otherwise, we have $4|Q|=e(P,Q)\le 2|P|$, a contradiction.  
\end{proof}

\begin{theorem}
$\ex_c(n,S_{3,1,1})=\lfloor \frac{3(n-1)}{2}\rfloor$ if\/ $n\ge 7$ and\/ $ex(6,S_{3,1,1})=9$.
\end{theorem}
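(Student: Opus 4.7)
The plan is to exhibit the claimed extremal graphs and then bound $e(G)$ for an arbitrary connected $S_{3,1,1}$-free graph $G$ via a case split on $\Delta(G)$. For the lower bound, $K_1 + M_{n-1}$ achieves $(n-1) + \lfloor (n-1)/2 \rfloor = \lfloor 3(n-1)/2 \rfloor$ edges, and it is $S_{3,1,1}$-free because every non-apex vertex has degree at most $2$ while no path of length $3$ starts from the apex (each matching partner of a neighbor of the apex is again adjacent to the apex). For $n=6$, the graph $K_{3,3}$ has $9$ edges and one checks directly that at any candidate center the long leg exhausts $N(\text{center})$ up to a single remaining vertex, leaving fewer than the two required short legs.

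The backbone of the upper bound is the following observation: if $c \in V(G)$ satisfies $d(c) \ge 3$ and $c v_2 v_3 v_4$ is a path in the $S_{3,1,1}$-free graph $G$, then $|N(c) \cap \{v_3, v_4\}| \ge d(c) - 2$, for otherwise two further vertices of $N(c) \setminus \{v_2, v_3, v_4\}$ supply the two short legs of an $S_{3,1,1}$ with long leg $c v_2 v_3 v_4$. In particular $d(c) = 3$ forces $\{v_3, v_4\} \cap N(c) \ne \emptyset$, while $d(c) \ge 4$ forces $\{v_3, v_4\} \subseteq N(c)$.

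For $\Delta(G) \le 3$, picking $c$ with $d(c) = 3$ (if $\Delta \le 2$ then $G$ is a path or cycle and $e(G) \le n$ suffices), the observation forbids any vertex at distance $3$ from $c$ and any edge inside the second neighborhood $N_2(c)$, so $V(G) = \{c\} \cup N(c) \cup N_2(c)$ and $N_2(c)$ is independent. Setting $e_1 = e(G[N(c)])$ and $e_2 = e_G(N(c), N_2(c))$, summing the three degrees in $N(c)$ gives $3 + 2 e_1 + e_2 \le 9$, hence $e(G) = 3 + e_1 + e_2 \le 9 - e_1 \le 9$. This is at most $\lfloor 3(n-1)/2 \rfloor$ for every $n \ge 7$ and equals $9$ when $n=6$.

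For $\Delta(G) \ge 4$, I take $c$ of maximum degree; the observation forces every vertex outside $N[c]$ to be a pendant attached to $N(c)$. Partitioning $N(c) = A \sqcup B$ according to whether the vertex has a neighbor inside $G[N(c)]$, one verifies that $A$-vertices have all their neighbors in $N[c]$ while only $B$-vertices may carry pendants. Re-applying the observation to centers inside $A$ and $B$ then shows that, if $c$ is not universal, every $A$-vertex has degree exactly $2$ (so $G[A]$ is a perfect matching) and every $B$-vertex carries at most one pendant; substituting into $|A| + |B| + |P| = n-1$ makes $e(G) = \tfrac{3}{2}|A| + |B| + |P|$ strictly smaller than $\lfloor 3(n-1)/2 \rfloor$. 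If $c$ is universal then $G - c$ contains no $P_3$, so $G - c \subseteq M_{n-1}$ and $e(G) \le \lfloor 3(n-1)/2 \rfloor$ with equality exactly at $K_1 + M_{n-1}$. The main technical obstacle will be closing all loopholes in this second case by iteratively invoking the observation at fresh centers inside $A$ and $B$; the first case, by contrast, is settled immediately once $V = \{c\} \cup N(c) \cup N_2(c)$ with $N_2(c)$ independent has been established.
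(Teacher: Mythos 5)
Your argument is correct in substance but follows a genuinely different route from the paper's. The paper's upper-bound proof analyses a longest cycle $C$ (splitting into $|C|=n$, $4<|C|<n$, $|C|=4$, and the case where every block is $K_2$ or $K_3$, the last handled by the recursion $f(n)\le\max\{f(n-1)+1,\,f(n-2)+3\}$), whereas you work locally around a maximum-degree vertex $c$ via the single observation that a path $cv_2v_3v_4$ forces $|N(c)\cap\{v_3,v_4\}|\ge d(c)-2$. Your $\Delta(G)\le 3$ case is complete and clean (it isolates $K_{3,3}$ and the $n=6$ value with almost no casework), and your $\Delta(G)\ge 4$ case, once the loopholes are closed exactly as you indicate --- applying the observation at $c$ to show that $N_2(c)$ consists of pendants attached to $B$ and that $A$-vertices have all neighbours in $N[c]$, then at a vertex $a\in A$ along the path $acbu$ (with $u$ a pendant on $b\in B$) to force $d(a)=2$ --- does yield $e(G)=\tfrac32|A|+|B|+|P|$. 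One caveat: your assertion that every $B$-vertex carries at most one pendant is false; for example, let $c$ have degree $n-4$ with a single non-pendant neighbour $z$ that itself has three pendant neighbours (a double star): no $S_{3,1,1}$ arises because no path on four vertices starts at $z$, yet $z\in B$ carries three pendants. Fortunately that claim is never needed: from $\tfrac32|A|+|B|+|P|=\tfrac32(n-1)-\tfrac12(|B|+|P|)$ together with $|B|\ge 1$ and $|P|\ge 1$ in the non-universal case you already get strict inequality against $\lfloor 3(n-1)/2\rfloor$, and the universal case reduces to $G-c$ being $P_3$-free as you say. Your approach buys a shorter, more mechanical proof for this particular spider; the paper's longest-cycle and block-decomposition method is the one that carries over to its treatment of $S_{2,2,2}$ and $S_{3,2,1}$, where the forbidden tree has several long legs and a purely degree-local observation at one vertex is no longer decisive.
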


\begin{proof}
The lower bounds are shown by $K_1+M_{n-1}$ for $n\ge 7$ and by $K_{3,3}$ for $n=6$. The former is $S_{3,1,1}$-free as shown in Proposition \ref{parameter} (\ref{largestbranch}) with $m(S_{3,1,1})=3$. The graph $K_{3,3}$ is $S_{3,1,1}$-free as the bipartition of $S_{3,1,1}$ has a part of size 4.

To obtain the upper bound, we consider an $S_{3,1,1}$-free connected graph $G$. The general idea is to choose a longest cycle $C=v_1v_2,\dots,v_k$ in $G$, and argue depending on its length $k$.
 
If $k=n$, then $C$ is a Hamiltonian cycle. It cannot have short chords; e.g.\ if $v_2v_4$ is an edge, then $S_{3,1,1}$ can have center $v_2$ and legs $v_2v_1$, $v_2v_3$, $v_2v_4v_5v_6$. Moreover if $n>6$, then longer chords cannot occur either. Indeed, if $v_2v_j$ with $j=5,…,n-2$ is an edge, then $v_2$ with $v_j$ and its two successors can form the leg of length 3. Likewise for $j=6,…,n-1$ such a leg can be formed using the two predecessors of $v_j$, still keeping the legs $v_2v_1$ and $v_2v_3$. This excludes all chords if $n>6$, hence $|E(G)|=n$. If $n=6$, then antipodal vertices can be adjacent without creating any copy of $S_{3,1,1}$, but no other chords may occur. In this way we obtain the extremal graph $K_{3,3}$.
 
Assume next that $4<k<n$. We show that this is impossible whenever $n\ge 6$. Since $G$ is connected, there is a vertex $x$ not in $C$ but having at least one neighbor in $C$. If e.g.\ $xv_2$ is an edge, we find $S_{3,1,1}$ with center $v_2$ and legs $xv_2$, $v_2v_1$, $v_2v_3v_4v_5$.
 
Assume now $k=4$, $C=v_1v_2v_3v_4$, $n\ge 6$. If $P$ is any path with one end in $C$ and all its other vertices in $V(G)\setminus V(C)$, then $P$ can have no more than two edges, otherwise $S_{3,1,1}$ would be found, with the long leg in $P$ and the two short legs in $C$. We are going to prove that if $P$ is shorter than 3, the number of edges in $G$ is smaller than what is given in the theorem.
 
If $P$ has length 2, let $xyv_1$ be a path attached to $C$. Then the edges $xv_2$, $xv_3$, $xv_4$, $yv_2$, $yv_4$ cannot be present because $C$ is a longest cycle. Also the edges $v_1v_3$ and $v_2v_4$ are excluded because $G$ is $S_{3,1,1}$-free. This implies $|E(G)|\le 8$ if $n=6$. If $n>6$, there should be a further vertex $z$ adjacent to $C\cup P$, but any edge from $z$ to $C\cup P$ would create an $S_{3,1,1}$. (For $zx$ the center is $v_1$, and for any other edge the center is the neighbor of $z$.) Hence $n>6$ is impossible in this case.
 
Suppose that $P=yv_1$ is a single edge not extendable to a longer path outside $C$. Then a sixth vertex $x$ can only be adjacent to $v_2$ or $v_4$ (or both), otherwise an $S_{3,1,1}$ would occur. And also here, it is not possible to extend this graph to a connected graph of order 7 without creating an $S_{3,1,1}$ subgraph. Hence $n=6$. Moreover, the diagonals of $C$ must be missing; e.g.\ the edges $xv_2$ and $v_2v_4$ would yield $S_{3,1,1}$ with center $v_2$ and legs $xv_2$, $v_2v_3$, $v_2v_4v_1y$. Thus the number of edges is only 4 plus the degree sum of $x$ and $y$, which is at most 7 because the presence of all four edges $xv_2$, $xv_4$, $yv_1$, $yv_3$ would make $G$ Hamiltonian, hence $C$ would not be a longest cycle.
 
Finally we have to consider graphs without any cycles longer than 3. It means that each block of $G$ is $K_2$ or $K_3$. Let $f(n)$ denote the maximum number of edges in such a graph. We clearly have $f(1)=0$, $f(2)=1$, $f(3)=3$. Let $B$ be an endblock of $G$, with cut vertex $w$. Deleting $B-w$ from $G$ we obtain a $S_{3,1,1}$-free connected graph of order $n-|V(B)|+1$, where $|V(B)|$ is 2 or 3. Hence
$$	f(n) \le \max \{ f(n-1) + 1 , f(n-2) + 3 \}.$$
This recursion implies $f(n) \le \lfloor 3(n-1)/2 \rfloor$ for every $n$, completing the proof of the upper bound for $n\ge 7$.
\end{proof}

\begin{theorem}
$\ex_c(n,S_{2,2,1})=2n-3$ if\/ $n\ge 6.$
\end{theorem}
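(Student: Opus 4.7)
The lower bound is witnessed by $K_2+E_{n-2}$, which has $1+2(n-2)=2n-3$ edges. I would verify $S_{2,2,1}$-freeness by observing that only the two apex vertices $u_1,u_2$ have degree at least $3$; any copy of $S_{2,2,1}$ must therefore be centered at one of them, but each apex-neighbor's sole non-apex edge goes to the other apex, forcing two length-$2$ legs at $u_1$ to terminate at the same vertex.

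For the upper bound I plan to induct on $n$, with base case $n=6$ verified from the enumeration of connected $6$-vertex graphs in \cite{H}, just as in the proof of Theorem \ref{smalltrees}(\ref{d22}) for $D_{2,2}$: every such graph with at least $10$ edges contains $S_{2,2,1}$. For $n\ge 7$ and a connected $S_{2,2,1}$-free graph $G$, deleting a leaf if $\delta(G)=1$ gives $e(G)\le 2n-4$; if $\delta(G)=2$ with some non-cut degree-$2$ vertex $v$, then $G-v$ is connected on $n-1\ge 6$ vertices and $e(G)\le 2(n-1)-3+2=2n-3$; otherwise a degree-$2$ cut vertex $v$ splits $G$ into two leaf-lobes $G_1,G_2$ with $|G_1|+|G_2|=n+1$, and a direct case analysis (using induction when both $|G_i|\ge 6$ and the trivial bound $e(G_i)\le 1+\binom{|G_i|-1}{2}$ when $|G_i|\in\{4,5\}$, while $|G_i|\le 3$ is excluded by $\delta(G)\ge 2$) yields $e(G)\le 2n-3$.

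The crucial case is $\delta(G)\ge 3$, where I claim $G$ must already contain $S_{2,2,1}$, giving a contradiction. Pick $v\in V(G)$, three distinct neighbors $a,b,c\in N(v)$, and set $A=N(a)\setminus\{v,b,c\}$, $B=N(b)\setminus\{v,a,c\}$, $C=N(c)\setminus\{v,a,b\}$. An $S_{2,2,1}$ centered at $v$ exists whenever one of the three pairs $(A,B),(A,C),(B,C)$ admits distinct representatives. The simultaneous failure of all three pairs forces either $A=B=C=\emptyset$ (in which case $\{v,a,b,c\}$ induces a component, contradicting $n\ge 6$ plus connectivity), or a rigid local structure in which $\{v,a,b,c\}$ essentially induces $K_4$ and all outside edges to/from $\{a,b,c\}$ pass through a single vertex $x$. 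The $\delta(G)\ge 3$ condition on $x$ combined with $n\ge 6$ and connectivity then yields a further neighbor $w$ of $x$ outside $\{v,a,b,c,x\}$, and the paths $a$-$v$-$b$, $a$-$x$-$w$, $a$-$c$ assemble into an $S_{2,2,1}$ centered at $a$.

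The main obstacle will be the sub-case analysis in the $\delta\ge 3$ step, which fans out with the sizes and overlaps of $A,B,C$, with whether $\deg(v)=3$ or $\deg(v)\ge 4$, and with which edges are present inside $\{v,a,b,c,x\}$. The unifying device is to relocate the candidate $S_{2,2,1}$ from the original center $v$ to whichever vertex of $\{v,a,b,c,x\}$ still admits a spread-out neighborhood; with the right choice of relocation, each sub-case yields a copy of $S_{2,2,1}$ directly.
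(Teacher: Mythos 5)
Your lower bound and the first half of your induction are fine and close to the paper's: same construction $K_2+E_{n-2}$ (the paper certifies freeness via the matching number, $\nu(K_2+E_{n-2})=2<3=\nu(S_{2,2,1})$, which is quicker than your degree argument), same base case $n=6$ from Harary's tables, and the same reduction of non-cut vertices of degree at most $2$. Your treatment of a degree-$2$ cut vertex by splitting into two lobes $G_1,G_2$ with $|G_1|+|G_2|=n+1$ is a genuinely different (and arguably cleaner) route than the paper's, which forms $H=G-v+xy$, applies induction to find a copy of $S_{2,2,1}$ in $H$, and reroutes that copy through $v$; your version avoids the rerouting case analysis at the cost of checking the small lobes $|G_i|\in\{4,5\}$, which you do correctly.

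The gap is in the $\delta(G)\ge 3$ step. First, a concrete failure: from the simultaneous failure of all three pairs you conclude ``either $A=B=C=\emptyset$, in which case $\{v,a,b,c\}$ induces a component.'' That is false when $d_G(v)\ge 4$: the emptiness of $A,B,C$ restricts only $a,b,c$, and $v$ may have further neighbors, so $\{v,a,b,c\}$ need not be a component (one must then either re-choose the triple inside $N(v)$ or build the spider through a fourth neighbor $d$ of $v$, using legs $v\,a\,b$, $v\,d\,e$, $v\,c$). Second, your enumeration of failure modes is incomplete: a pair $(A,B)$ can also fail because exactly one of the two sets is empty, so the residual configuration includes the case where, say, $B=C=\emptyset$ while $A$ is arbitrarily large; this does not match your ``all outside edges to/from $\{a,b,c\}$ pass through a single vertex $x$'' picture (here $a$ alone may have many outside neighbours), and needs its own relocation (center $a$, legs $a\,b\,v$, $a\,p\,q$, $a\,c$, where $p\in A$ and $q$ exists because $d_G(p)\ge3$ and $p\not\sim b,c$). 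Both defects are repairable by your stated relocation device, but as written the step would not go through. Note that the paper sidesteps this fan-out entirely by exploiting the edge count: assuming $e(G)=2n-2$ forces both a degree-$3$ vertex and a vertex of degree $\ge 4$ to exist, hence by connectivity an edge $xy$ with $d(y)=3$, $d(x)\ge 4$; knowing $N(y)$ exactly and having a spare neighbour $z$ of $x$ reduces the local analysis to three short subcases. You may want to adopt that anchoring, since you only need the implication under the hypothesis $e(G)\ge 2n-2$, not for all graphs of minimum degree $3$.
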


\begin{proof}
The lower bound is shown by $K_2+E_{n-2}$ as it has matching number 2, while $\nu(S_{2,2,1})=3$.

To obtain the upper bound on $\ex_c(n,S_{2,2,1})$, we proceed by induction: for $n = 6$ every connected graph on 6 vertices and 10 edges contains $S_{2,2,1}$ (by inspecting the table of graphs of \cite{H} on pages 222--224).
 
For the induction step assume that the statement of the theorem holds for graphs of at most $n-1$ vertices and assume on the contrary that $G$ is a connected graph on $n$ vertices and $2n-2$ edges without $S_{2,2,1}$. Here $2n-2$ suffices as otherwise if $e(G) \ge 2n – 1$, we can delete an edge on a cycle.  
 
If $\delta(G) \le 2$ and there is a vertex $v$ of degree at most 2 which is not a cut-point, then we can apply induction to $H=F-v$ to obtain $e(G)\le e(H)+2\le 2(n-1)-3+2=2n-3$, a contradiction.
 
Suppose $\delta(G) = 2$ and every vertex of degree 2 is a cut-point. Then let $v$ be such a cut-point with neighbors $x$ and $y$. Consider $H =  G - v + (xy)$. Here $|H| = n-1$ and $e(H) = 2n-2 -2 +1 = 2(n-1) -2 +1$, hence by induction $H$ contains a copy $S$ of $S_{2,2,1}$. If $S$ does not use the edge $xy$, then $S$ is also in $G$ --- a contradiction. If $S$ uses $xy$ such that one of $x$ and $y$, say $x$, is a leaf in $S$, then replace $x$ by $v$ and the edge $xy$ by $vy$ to obtain a copy $S'$ of $S_{2,2,1}$ in $G$ --- a contradiction. Finally, if $xy$ is the edge of a 2-leg of $S$ containing the center, say $x$ and the leg is $xyz$, then replace this leg by $xvy$ to obtain $S'$ in $G$ --- a contradiction.
 
So we can assume $\delta(G)  \ge 3$. If all vertices are of degree 3, then $e(G)  = 3n/2 < 2n  - 2$. If all vertices are of degree at least 4, then $e(G)  \ge 2n>2n-2$, hence there exists a vertex $y$  of degree 3 adjacent to a vertex $x$ of degree at least 4. Let $u,v$ be the other two neighbors of $y$, and let $z\neq u,v,y$ be a neighbor of $x$. If $u$ or $v$ has a neighbor outside these 5 vertices, then we obtain a copy of $S_{2,2,1}$ with center $y$. If not and $N(x)=\{u,v,y,z\}$, then $z$ must have a neighbor outside these 5 vertices and we obtain a copy of $S_{2,2,1}$ with center $x$. Finally, if $N(u)\cup N(v)\subseteq \{u,v,x,y,z\}$ and $z'$ is another neighbor of $x$, then $d_G(z')\ge 3$ implies that $z'$ must have a neighbor outside these 6 vertices, and we obtain a copy of $S_{2,2,1}$ with center $x$. This contradiction finishes the proof.
\end{proof}

\begin{proof}[Proof of Theorem \ref{d22*}]
 The assertion is trivial for $n<7$. For larger $n$ the split graph construction $K_2 + E_{n-2}$ shows that $2n-3$ is a lower bound.
 
To derive the same as an upper bound, assume $n>6$ and consider any $D^*_{2,2}$-free graph $G$ of order $n$ with more than $2n-4$ edges. Then, by Theorem \ref{smalltrees} (\ref{d22}), there is a $D=D_{2,2}$ subgraph in $G$; let the central edge of $D$ be $xy$.

If some vertex not in $D$ is adjacent to a leaf of $D$, then a copy of $D^*_{2,2}$ arises, a contradiction. More generally, there cannot exist any vertex at distance exactly 2 from $\{x,y\}$. By the connectivity of $G$, it follows that every vertex of $G$ is adjacent to at least one of $x$ and $y$. On this basis we partition $V(G)-\{x,y\}$, defining
\[
X = N(x)-N[y],	\hskip 1truecm Y = N(y)-N[x], \hskip 1truecm	Z = N(x)\cap N(y) .\]
Let us assume $|Y|\ge |X|$. Due to the presence of $D_{2,2}$ we know that $|X|+|Z|\ge 2$ holds. Moreover, $|Y|\ge |X|$ with $n\ge 7$ implies $|Y|+|Z|\ge 3$. Hence there cannot be any $X-Y$ edges, moreover $Y\cup Z$ is an independent set, both because $G$ is $D^*_{2,2}$-free. For the same reason, if $|X|+|Z|>2$, then also $X\cup Z$ is independent. In this case the entire $X\cup Y \cup Z$ is independent and $G$ cannot have more than $2n-3$ edges, yielding just the extremal split graph $K_2 + E_{n-2}$. Otherwise, if $|X|+|Z|=2$, there can be just one edge inside $X\cup Z$, hence we have 6 edges in the $K_4$ subgraph induced by $X\cup Z \cup \{x,y\}$, and there are further $n-4$ edges from $Y$ to $y$. These are altogether $n+2$ edges only, i.e.\ fewer than the assumed $2n-3$. This contradiction completes the proof.
\end{proof}

\begin{proof}[Proof of Theorem \ref{s222}]
 To simplify notation, let $f(n) = \ex_c(n,S_{2,2,2})$. The lower bound for $n\le 7$ is obtained by the following construction that works for all $n$. Take a complete graph $K_4$ on the vertex set $\{v_1,v_2,v_3,v_4\}$ and join all $v_i$ for $i=5,6,…,n$ to $v_1$ and $v_2$. Equivalently, $v_1$ and $v_2$ are universal vertices, supplemented with the single edge $v_3v_4$. This connected graph with $2n-2$ edges does not contain $S_{2,2,2}$ because it is not possible to delete two vertices from $S_{2,2,2}$ to destroy all but one edges.
 
The argument for the upper bound applies induction on $n$, with basic cases $n\le 7$, from which only $n=7$ is nontrivial. We note here that $n=5$ and $n=6$ are the only cases where $2n-2$ is not an upper bound on the formula given for $f(n)$.
 
For $n=7$ the assertion is that every connected graph $G$ with 7 vertices and at least 13 edges contains $S_{2,2,2}$ as a subgraph. To prove it, suppose first that G has a cut-point $x$, and consider the vertex distribution between the components of $G-x$. If it is (3,3) --- where we unite components if there are more than two, e.g.\ the distribution (3,2,1) is also viewed as (3,3) --– then already 9 nonadjacencies are found, hence $G$ would have at most $21-9=12$ edges, a contradiction. If the distribution is (2,4), then it forces 8 nonadjacencies, hence $G$ must be the graph in which the two blocks incident with $x$ are $K_3$ and $K_5$. Obviously this graph contains $S_{2,2,2}$. If the distribution is (1,5), then $x$ has a pendant neighbor, say $y$, and $G-y$ is a connected graph of order 6, having at least 12 edges. Routine inspection shows that all such graphs $G$ contain $S_{2,2,2}$.
 
Assume that $G$ is 2-connected. If $G$ has minimum degree 3, then $G$ has a Hamiltonian cycle, say $C=v_1v_2v_3v_4v_5v_6v_7$. (More generally it is well known that a graph of order $2d+1$ and minimum degree $d$ is non-Hamiltonian if and only if either it is the complete bipartite graph $K_{d,d+1}$ or it has two blocks incident with a cut vertex, both blocks being $K_{d+1}$; in our case both of them would have only 12 edges.) The presence of any long chord in $C$, e.g.\ $v_3v_6$ immediately creates an $S_{2,2,2}$ with center $v_3$ and legs $v_3v_2v_1$, $v_3v_4v_5$, $v_3v_6v_7$. Moreover, any three consecutive short chords, e.g.\ $v_2v_4$, $v_3v_5$, $v_4v_6$ create an $S_{2,2,2}$ with center $v_4$ and legs $v_4v_2v_1$, $v_4v_3v_5$, $v_4v_6v_7$. And now at least one of these situations holds because in general a cycle of length $n$ without three consecutive short chords and with no other chords at all can have no more than $n+2n/3<2n-2$ edges if $n\ge 7$.
 
Hence in the 2-connected case $G$ has minimum degree exactly 2, and if we remove a vertex $x$ of degree 2, we obtain a graph on 6 vertices with at least 11 edges. If it is $K_5$ with a pendant edge, then the pendant vertex must be adjacent to $x$ and we immediately find $S_{2,2,2}$. Otherwise there can be at most one vertex of degree 2 in $G-x$, hence it contains a $C_6$, say $v_1v_2v_3v_4v_5v_6$ (as a rather particular corollary of P\'osa’s theorem). If the two neighbors of $x$ are antipodal in $C$, e.g.\ $v_3$ and $v_6$, we find $S_{2,2,2}$ with center $v_3$ and legs $v_3xv_6$, $v_3v_2v_1$, $v_3v_4v_5$. If the two neighbors of $x$ are consecutive in $C$, then $C$ extends to $C_7$ which we already settled. Hence we can assume that the neighbors of $x$ are $v_2$ and $v_4$. Since $C$ has at least 5 chords, some of the five chords $v_1v_3$, $v_1v_4$, $v_2v_5$, $v_3v_5$, $v_3v_6$ must be present, and each of them creates $S_{2,2,2}$ with $x$ and the edges of $C$. This completes the proof of $f(7)=12$.
 
Turning now to the inductive step, assume that $n\ge 8$ and that the upper bound $2n-2$ is valid for all smaller orders other than 5 and 6. Depending on the structure of the graph under consideration, we will apply one of the following upper bounds:
 \[
	f(n-1) + 2 ,	\hskip 1truecm	f(n-3) + 6 , \hskip 1truecm		f(n-6) + 12 .
\] 
Suppose that $G$ is an $S_{2,2,2}$-free connected graph of order $n\ge 8$, and $G$ is $S_{2,2,2}$-saturated, i.e.\ the insertion of any new edge inside $V(G)$ would create an $S_{2,2,2}$ subgraph. Under the latter assumption we observe the following.
 
\begin{claim}\label{claim}
 If $x$ is a vertex of degree 2, say with neighbors $y$ and $z$, then $yz$ is also an edge of $G$.
\end{claim}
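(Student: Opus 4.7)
The plan is to argue by contradiction, exploiting the saturation hypothesis. Suppose $yz \notin E(G)$. Since $G$ is $S_{2,2,2}$-saturated, the graph $G + yz$ contains a copy $T$ of $S_{2,2,2}$, and this copy must actually use the new edge $yz$ (otherwise $T \subseteq G$, contradicting that $G$ is $S_{2,2,2}$-free). The idea is to reroute $T$ around the forbidden edge by inserting $x$: since $N_G(x) = \{y, z\}$, the length-$2$ path $y, x, z$ lies in $G$ and can replace the single edge $yz$.

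According to where the edge $yz$ sits inside $T$, there are two cases up to relabeling. In Case~A, $yz$ is incident to the center of $T$, and after swapping the roles of $y$ and $z$ if necessary we may assume $y$ is the center of $T$ and the affected leg consists of vertices $y, z, \ell$ with $z$ the middle vertex and $\ell$ the leaf. I would build a new copy $T'$ by keeping the center $y$ and the two unaffected legs of $T$, and replacing the leg $y, z, \ell$ by the leg $y, x, z$. In Case~B, $yz$ joins a middle vertex to a leaf, and we may assume the affected leg has the form $c, y, z$ with $c \ne y, z$ the center of $T$. Here the new copy $T'$ keeps $c$ and the two unaffected legs and replaces the leg $c, y, z$ by $c, y, x$. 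In both cases every edge of $T'$ other than the two incident to $x$ is also an edge of $T$ different from $yz$, hence an edge of $G$; and the two edges $yx$, $xz$ at $x$ lie in $G$ by hypothesis.

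The only non-trivial point is to verify that $x$ is distinct from the other vertices of $T'$, so that $T'$ is a genuine copy of $S_{2,2,2}$ in $G$. This is exactly where the assumption $N_G(x)=\{y,z\}$ does the work: if $x$ coincided with some $v \in V(T)\setminus\{y,z\}$, then at least one neighbor of $v$ inside $T$ also lies in $V(T)\setminus\{y,z\}$ (a routine case check according to whether $v$ is the center, a middle vertex, or a leaf, using that the seven vertices of $T$ are distinct), and the corresponding edge of $T$ is not $yz$ and is hence an edge of $G$. This forces a $G$-neighbor of $x$ to lie outside $\{y,z\}$, contradicting $N_G(x)=\{y,z\}$. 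Once the distinctness is confirmed, $T' \subseteq G$ contradicts the $S_{2,2,2}$-freeness of $G$, and we conclude $yz \in E(G)$. The distinctness check is the only mildly fiddly step, but it reduces to a mechanical inspection powered entirely by the degree constraint $d_G(x)=2$.
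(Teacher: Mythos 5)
Your proposal is the same argument as the paper's: use saturation to produce a copy of $S_{2,2,2}$ in $G+yz$ through the new edge, split according to whether $yz$ is a center--middle edge or a middle--leaf edge of that copy, and reroute the affected leg through the degree-2 vertex $x$ (replacing $y,z,\ell$ by $y,x,z$, resp.\ $c,y,z$ by $c,y,x$). The one inaccuracy is in your distinctness check: the blanket claim that every $v\in V(T)\setminus\{y,z\}$ has a $T$-neighbour outside $\{y,z\}$ is false for exactly one vertex, namely the leaf $\ell$ of the affected leg in Case~A, whose unique $T$-neighbour is $z$. So you cannot conclude $x\notin V(T)$; the coincidence $x=\ell$ can genuinely occur (the leg of $T$ in $G+yz$ may be $y,z,x$). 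Fortunately this is harmless, since $\ell$ is precisely the vertex discarded when you form $T'$, so $V(T')=V(T)$ in that case and the new copy is still a legitimate $S_{2,2,2}$ in $G$ --- this is why the paper's proof explicitly writes ``allowing also $w=x$'' and only needs that $x$ is not a \emph{mid-vertex} of any leg. If you restrict your distinctness verification to the vertices actually retained in $T'$ (the center and the two unaffected legs), your degree argument goes through verbatim and the proof is complete.
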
	
 
\begin{proof}[Proof of Claim]
Otherwise $yxz$ would be an induced path in $G$. Let then $G’$ be the graph obtained by the insertion of edge $yz$. By assumption there is an $S=S_{2,2,2}$ subgraph in $G’$, which necessarily contains the edge $yz$. If $yz$ is a leaf edge of $S$, then of course the degree-3 center of $S$ cannot be $x$, it must be another vertex $w$ adjacent to $y$ or to $z$. But then $z$ or $y$ is a leaf vertex of $S$, and replacing $yz$ with $yx$ or $zx$ we find another copy of $S_{2,2,2}$ which is a subgraph of $G$, a contradiction. The other possibility would be that $y$ or $z$ is the degee-3 vertex of S, and the edge $yz$ is continued with a leaf edge $zw$ or $yw$ (allowing also $w=x$). But then $x$ cannot be a mid-vertex of any leg of S since $x$ does not have a neighbor other than $y$ and $z$. Hence the leg $yzw$ or $zyw$ can be replaced with $yxz$ or $zxy$, and we would again find a copy of $S_{2,2,2}$ as a subgraph of $G$.
\end{proof}

As a consequence of Claim \ref{claim}, if $G$ has a vertex of degree 1 or 2, then $|E(G)| \le f(n-1) + 2 \le 2n-2$ follows by induction, because deleting a vertex of minimum degree the graph remains connected. Hence from now on we may assume that $G$ has minimum degree at least 3.
 
Let $C=v_1v_2v_3v_4\dots v_s$ be a longest cycle in $G$. We have already seen that if $s=n$, then $|E(G)| \le 5n/3 < 2n-2$. Next, we observe that if $n>s\ge 5$, then $V(G)\setminus V(C)$ is an independent set. Indeed, if $xy$ is an edge outside $C$ then there is a path $P$ (possibly an edge) from $\{x,y\}$ to $C$ and in this case a copy of $S_{2,2,2}$ is easily found using edges of $C$, with two edges from $P\cup \{xy\}$. E.g., if $v_3x$ is an edge, then $S_{2,2,2}$ can have center $v_3$ and legs $v_3xy$, $v_3v_2v_1$, $v_3v_4v_5$. Thus, every vertex outside of $C$ has at least three neighbors in $C$. Moreover, no two of those neighbors are consecutive in $C$, because $C$ is longest. This immediately excludes $s=5$. But also $s>5$ is impossible because if e.g.\ $v_2$, $v_4$, $v_6$ are neighbors of $x$, then an $S_{2,2,2}$ can have center $x$ and legs $xv_2v_1$, $xv_4v_3$, $xv_6v_5$.
 
As a consequence, investigations are reduced to $S_{2,2,2}$-free connected graphs with minimum degree 3 and without any cycles longer than 4. Such a graph $G$ cannot be 2-connected (because due to Dirac’s theorem, 2-connectivity would imply the presence of a cycle longer than 5). Hence $G$ contains at least two endblocks.
 
Let $B$ be an endblock of $G$, attached with cut-point $w$ to the other part of $G$. We argue that $B$ induces $K_4$ in $G$. All vertices of $B$ except $w$ have degree at least 3 inside $B$, therefore $B$ contains a 4-cycle, say $C’=wxyz$. If there is a vertex $u$ in $V(B)\setminus V(C’)$, then 2-connectivity of $B$ and the exclusion of cycles longer than 4 imply that there are exactly two neighbors of $u$ in $C$, either $w$ and $y$, or $x$ and $z$. But then there must exist a third neighbor $v$ of $u$ not in $C$, and $v$ also has two neighbors in $C$; and then a cycle longer than 4 would occur. Thus $B$ is a $K_4$ indeed.
 
Now we are in a position to complete the proof of the theorem by induction on $n$. Consider any maximal $S_{2,2,2}$-free connected graph $G$ of order $n>7$ that has at least $2n-2$ edges. If $G$ has a vertex of degree at most 2, then apply the upper bound $f(n-1)+2$.

 If $G$ has minimum degree at least 3, we know that $G$ is not 2-connected. Then:
	
If $n=8$ or $n=9$, remove all the 6 non-cutting vertices of two $K_4$ endblocks of $G$ and apply the upper bound $f(n-6)+12$. This yields $|E(G)|\le 13$ for $n=8$ and $|E(G)|\le 15$ for $n=9$, both are smaller than $2n-2$.
	
If $n\ge 10$, remove the 3 non-cutting vertices of a $K_4$ endblock of $G$ and apply the upper bound $f(n-3)+6$. This yields $|E(G)|\le 2n-2$.
\end{proof} 
 
\begin{rem} The extremal graphs are not unique if $n\ge 7$. In the graph constructed at the beginning of the proof we can remove three vertices of degree 2 and attach a block $K_4$ to one of the two high-degree vertices. As another alternative for $n\ge 10$, we can remove six vertices of degree 2 and attach two blocks isomorphic to $K_4$, one block to each high-degree vertex. A further extremal graph of order 7 can be obtained from $K_5$ by attaching two pendant edges to a vertex of $K_5$.
\end{rem} 

\vskip 0.3truecm

\begin{proof}[Proof of Theorem \ref{s321}]
 A lower bound for $n\ge 7$ is the split graph $K_2 +E_{n-2}$ with $2n-3$ edges which does not even contain $S_{2,2,1}$ and hence $S_{3,2,1}$ cannot be a subgraph either.
 
 The proof of the upper bound proceeds by induction on $n$. The base case $n=7$ is left to the Reader.
 
  Aassume $G$ is a minimum connected  counterexample with $n \ge  8$ vertices and  has at least $2n - 2$ edges but no copy of $S_{3,2,1}$.
  
If $G$ contains a vertex $v$ of degree at most 2 such that $H=G-v$ is connected, then, by minimality, $e(H)  \le 2(n-1) - 3$ hence $2n-2 \le e(G)  \le e(H) +2 \le 2n- 3$, a contradiction.
 
 Next, assume $v$ is a cut-point with neighbors $x$ and $y$.  Consider the graph $H$ that we obtain from $G$ by deleting $v$ and adding the edge $xy$.  We will show that if $H$ contains  $S_{3,2,1}$ then so  does $G$.  Let $A$ be the component containing $x$ and $B$ the component containing $y$. By symmetry we may assume that if $H$ contains a copy $S$ of $S_{3,2,1}$, then its center is in $A$ and so $B$ can contain vertices of at most one leg of $S$. We consider cases according to the number of vertices in $S\cap B$. If $A$ contains $S_{3,2,1}$ completely, then so does $G$. If $A$ contains all of $S_{3,2,1}$ except for a leaf played by $y$,  then the same copy with $v$ replacing $y$ is contained in $G$.

If $S\cap B=\{y,w\}$, then the leg of $S$ ending $x-y-w$ can be replaced in $G$ with $x-v-y$  to obtain a copy $S'$ of $S_{3,2,1}$.
If $S\cap B=\{y,w,z\}$, then the leg of $S$ ending $x-y-w-z$ can be replaced in $G$ with $x-v-y-w$  to obtain a copy $S'$ of $S_{3,2,1}$.
 
So, as proved, $H$ must be $S_{3,2,1}$-free, hence $2n-2 \le e(G)  \le e(H) + 1 \le 2(n-1) - 3 +1 \le 2n-4$, a contradiction.
 
Therefore, from now on we may assume $\delta(G) \ge 3$. By Theorem \ref{smalltrees} (\ref{s221}), we know that $G$ contains a copy $S$ of $S_{2,2,1}$. Let $v$ be the center of $S$ with legs $v-u$, $v-x-y$, and $v-a-b$. If $y$ or $b$ has a neighbor not in $S$, then $G$ contains a copy of $S_{3,2,1}$ --- a contradiction.

Suppose $x$ (or $a$)  has a neighbor $z$ not in S.  Then $z$ cannot be adjacent to any of $v,y,a,b$ as a copy of  $S_{3,2,1}$ would appear. Also, $z$ cannot be adjacent to any vertex outside $S$ as again a copy of $S_{3,2,1}$ would appear in $G$.
By $\delta(G)\ge 3$, $z$ must be adjacent to $u,x$, and $a$, but then a copy of $S_{3,2,1}$ (this time with center $z$) would appear in $G$.

We have shown so far that $x,y,a,b$ cannot have neighbors outside $S$. 

If $u$  has at least two neighbors $z$ and $w$ outside $S$, then they cannot be adjacent (it would create the leg $v-u-z-w$ of a copy of  $S_{3,2,1}$) and none of them can have a neighbor outside $S$ as a copy of $S_{3,2,1}$ would appear in $G$.  As shown above, they cannot be adjacent to any of $x ,y,a,b$  hence they have degree at most 2 (with neighbors $u$ and possibly $v$) contradicting $\delta(G) \ge 3$.
 
If $u$ has just one neighbor, say $z$ outside $S$, then $z$ cannot have a neighbor outside $S$ as a copy of $S_{3,2,1}$ would appear, and as before, $z$ cannot be adjacent to any of $x , y , a , b$ hence $z$ can be adjacent to at most $u$ and $v$ but then $d_G(z) \le 2$ contradicts  $\delta(G) \ge 3$. 

So the only vertex of $S$ that can have further neighbors outside $S$ is $v$.  We claim that there cannot exist a path $v-w-z$ with $w,z\notin S$. Indeed, if $w,z$ existed, then any of the edges $ax$, $ay$ would create a copy of $S_{3,2,1}$ with center $a$. Similarly, any of the edges $xa$, $xb$ would create a copy of $S_{3,2,1}$ with center $b$. But then $\delta(G)\ge 3$ implies the
presence of $ua$ and $ux$ in $G$ creating a copy of $S_{3,2,1}$ with center $u$. Therefore all vertices outside $S$ must have degree 1, which case has already been dealt with. This finishes the proof of the induction step.
\end{proof}

\begin{proof}[Proof of Theorem \ref{s31dots1}]
It is enough to prove that if $G$ is a connected $n$-vertex graph with $\Delta(G)\ge \Delta(T)$, then $G$ contains $T$ or $e(G)\le \lfloor \frac{(\Delta(T)-1)n}{2}\rfloor$. So fix a vertex $v$ with $d_G(v)=\Delta(G)\ge\Delta(T)$ and consider the partition $\{v\}, N(v), X:=V(G)\setminus N[v]$.

If $X$ contains an edge $xy$, then by connectivity of $G$, there must exist a path (maybe a single edge) from $xy$ to $N(v)$ and we find a copy of $T$ in $G$. So we may assume that $X$ is independent, and thus by connectivity of $G$, every $x\in X$ is adjacent to at least one $u\in N(v)$.

\vskip 0.3truecm

\textsc{Case I: } $d_G(v)=\Delta(G)>\Delta(T)$.

\vskip 0.2truecm

Then any $x\in X$ is adjacent to exactly one vertex $u\in N(v)$ as if $xu,xu'$ are edges in $G$, then $uxu'$ can form the long leg of a copy of $T$ with center $v$ and other neighbors of $v$ complete this copy of $T$. So $d_G(x)=1$ for all $x\in X$. Let $u,u'\in N(v)$ be two vertices such that at least one of them has a neighbor in $X$. Then again if $uu'$ is an edge, we find a copy of $T$. So if $U\subseteq N(v)$ is the set of neighbors of $v$ that are adjacent to a vertex in $X$ and $U'=N(v)\setminus U$, then $e(G)\le (|\{v\}\cup U\cup X|-1)+ e( U')$. If $|U'|\le\Delta(T)+1$, then $e(U')\le \binom{\Delta(T)+1}{2}$ and so $e(G)\le n-1+\binom{\Delta(T)+1}{2}\le \lfloor \frac{(\Delta(T)-1)n}{2}\rfloor$ as $\Delta(T)-1\ge 3$. Finally, if $|U'|\ge \Delta(T)+2$, then either $G[U']$ is a (partial) matching and thus $e(G)\le (1+|U|+|X|-1)+\frac{3|U'|}{2}\le \frac{3(n-1)}{2}\le \lfloor \frac{(\Delta(T)-1)n}{2}\rfloor$ (here we use $\Delta(T)\ge 4)$ or $G[U']$ contains a path on 3 vertices, and then by $|U'|\ge \Delta(T)+2$ we find a copy of $T$ in $G$.

\vskip 0.3truecm

\textsc{Case II: } $d_G(v)=\Delta(G)=\Delta(T)$.

\vskip 0.2truecm

As $X$ is independent, we have $e(G)\le (\Delta(G)+1)\Delta(G)=(\Delta(T)+1)\Delta(T)=O(1)$.
\end{proof}

\section{Concluding remarks}

Theorem \ref{gamma} gave upper and lower bounds on $\gamma$. If the lower bound of either (1) or (3) of Theorem \ref{broom} turned out  to be (asymptotically) sharp (which we believe to be the case) for $a=(1/2-\varepsilon)k$ or $a=(1/2+\varepsilon)k$, then the upper bound on $\gamma$ would improve from $2/3$ to $1/2$. Note that a special case of Theorem \ref{s31dots1} yields $\ex_c(n,S_{3,1,1,1})=\lfloor \frac{(\Delta(S_{3,1,1,1})-1)n}{2}\rfloor$, so a small case when $a=\lfloor k/2\rfloor$. We have no evidence to believe that the lower bound of $1/3$ on $\gamma$ is best possible. 

In Proposition \ref{parameter}, we enumerated several graph parameters based on which we could define general constructions avoiding trees $T$ for which these parameters have small value. It would be nice to add other parameters to this list, and would be wonderful to prove that it is enough to consider a finite set of parameters to determine the asymptotics of $\ex_c(n,T)$ for all trees $T$. Of particular interest is the characterization of those trees for which $\ex(n,T)  - c(T) \le \ex_c(n,T)  \le \ex(n,T)$ holds for some constant $c(T)$.

As for special tree classes, one such class that could give some insight is the set of spiders with all legs of at most 2 vertices. For the spider $S=S_{2,2,\dots,2,1,1,\dots,1}$ with $t$ legs of two vertices and $s$ legs consisting of a single vertex, we have $|S|=2t+s+1$, and 
\begin{itemize}
    \item 
    $\nu(T)=t+1$ if $s>0$,
    \item
    $\Delta(T)=t+s$,
    \item
    $m_2(T)=4$ if $t\ge 2$.
\end{itemize}
The construction of Proposition \ref{smalltrees} (\ref{maxdeg}) based on maximum degree outperforms the one based on the matching number in Proposition \ref{smalltrees} (\ref{matching}) if $s>t$. But the one based on $m_2$ in Proposition \ref{smalltrees} (\ref{twobranches}) is better than both previous ones once $s\ge 5$ and $t\ge 2$. It would be interesting to see whether these constructions achieve the asymptotics of $\ex_c(n,S)$.

Classical  Turan numbers are monotone with two respects:
Firstly, if $H$ is a subgraph of $F$ then $\ex(n,H) \le \ex(n,F)$.  This inequality is preserved for the connected Tur\'an number $\ex_c(n,F)$ (excluding the small ``undefined" cases $K_2$ and $P_3$).
Secondly, if $m < n$, then $\ex(m,F)\le \ex(n,F)$.  This property is not necessarily preserved by connected Tur\'an numbers  for small values of $n$ with respect to $|T|$. There are several examples given by our results, of the following type: $\ex_c(|T|-1,T)=\binom{|T|-1}{2}>\ex_c(|T|,T)$; see e.g.\ $T=S_{3,2,1}$.

\begin{problem}
Is it true that there exists a threshold $n_0(F)$ such that $\ex_c(m,F)\le \ex_c(n,F)$ holds whenever $n_0(F)\le m<n$?
\end{problem}


\begin{thebibliography}{99}
\bibitem{BGLS}
P.N. Balister, E, Győri, J. Lehel, R.H. Schelp,  Connected graphs without long paths. Discrete Mathematics, 308(19) (2008), 4487--4494.
\bibitem{BJ}
N. Bougard, G. Joret, Tur\'an's theorem and $k$‐connected graphs. Journal of Graph Theory, 58(1) (2008), 1--13.
\bibitem{BK}
N. Bushaw, N. Kettle, Tur\'an numbers of multiple paths and equibipartite forests. Combinatorics, Probability and Computing, 20(6) (2011), 837--853.
\bibitem{E}
P. Erd\H os, Extremal problems in graph theory. In \textit{Theory of graphs and its
applications, Proc. Sympos. Smolenice} (1964), 29--36.
\bibitem{ES}
P. Erd\H os, M. Simonovits, A limit theorem in graph theory. Studia Sci. Math. Hungar., 1 (1966), 51--57.
\bibitem{ESt}
P. Erd\H os, A.H. Stone, On the structure of linear graphs. Bull. Amer. Math. Soc. 52 (1946), 1087--1091.
\bibitem{FS}
Z. F\"uredi, M. Simonovits, The history of degenerate (bipartite) extremal graph problems, In \emph{Erd\H os Centennial}, Bolyai Soc. Math. Stud., 25, J\'anos Bolyai Math. Soc., Budapest (2013), 169--264.
\bibitem{H}
F. Harary, \textit{Graph Teory}.  Addison-Wesley Publishing Company. Third printing 1972 .
\bibitem{K}
G.N. Kopylov, On maximal paths and cycles in a graph. Doklady Akademii Nauk SSSR, 234(1) (1977), 19--21.
\bibitem{LLST}
Y. Lan, T. Li, Y. Shi, J. Tu,  The Turán number of star forests. Applied Mathematics and Computation, 348 (2019), 270--274.
\bibitem{LLP}
B. Lidický, H. Liu, C. Palmer, On the Turán number of forests. The Electronic Journal of Combinatorics, 20(2) (2013), \#P62.
\bibitem{S}
M. Stein, Tree containment and degree conditions. In: Raigorodskii, A.M., Rassias, M.T. (eds) \emph{Discrete Mathematics and Applications}. Springer Optimization and Its Applications, vol. 165 (2020), 459--486. Springer, Cham.
\bibitem{YZ}
L.T. Yuan, X.D. Zhang, The Turán number of disjoint copies of paths. Discrete Mathematics, 340(2) (2017), 132--139.
\bibitem{ZW}
L.P. Zhang, L. Wang, The Turán numbers of special forests. Graphs and Combinatorics, 38(3) (2022), \#84.
\end{thebibliography}
\end{document}